\newtheorem{thm}{Theorem}[section]
\newtheorem{prop}[thm]{Proposition}
\newtheorem{cor}[thm]{Corollary}
\newtheorem{fact}[thm]{(almost) Theorem}
\newtheorem{lemma}[thm]{Lemma}
\newtheorem{preremark}[thm]{Remark}
\newenvironment{remark}{\begin{preremark}\rm}{\medskip \end{preremark}}
\numberwithin{equation}{section}
\newcommand{\R}{\mathbb R}
\newcommand{\T}{\mathcal S}
\DeclareMathOperator{\tr}{tr}
\title{Singular solutions to parabolic equations in nondivergence form}
\author{Luis Silvestre}
\begin{document}

\maketitle

\begin{abstract}
For any $\alpha \in (0,1)$, we construct an example of a solution to a parabolic equation with \emph{measurable} coefficients in \textbf{two} space dimensions which has an isolated singularity and is not better that $C^\alpha$. We prove that there exists no solution to a fully nonlinear uniformly parabolic equation, in any dimension, which has an isolated singularity where it is not $C^2$ while it is analytic elsewhere, and it is homogeneous in $x$ at the time of the singularity. We build an example of a non homogeneous solution to a fully nonlinear uniformly parabolic equation with an isolated singularity, which we verify with the aid of a numerical computation.
\end{abstract}

\section{Introduction}
In this work, we are interested in the emergence of singularities from the flow of parabolic equations. We study two related types of equations. The equations with \emph{measurable} coefficients have the form
\begin{equation} \label{e:measurable_coefficients}
u_t - a_{ij}(t,x) \partial_{ij} u = 0.
\end{equation}
Here the coefficients $a_{ij}$ satisfy the ellipticity condition $\lambda \mathrm{I} \leq \{a_{ij}(t,x)\} \leq \Lambda \mathrm{I}$, for every point $(t,x)$ in the domain of the equation. No regularity is assumed for $a_{ij}$ with respect to either $x$ or $t$.

The other class of equations is that of translation-invariant fully nonlinear parabolic equations of the form
\begin{equation} \label{e:fully_nonlinear}
u_t - F(D^2 u) = 0.
\end{equation}
Here, we always assume that the function $F$ is uniformly elliptic in the sense that for any pair of symmetric matrices $A,B \in \R^{d \times d}$, if $B \geq 0$, we have $\lambda \tr B \leq F(A+B) - F(A) \leq \Lambda \tr B$.

Centered around the work of Krylov and Safonov \cite{ks}, there is a well developed regularity theory for parabolic equations in nondivergence form. Solutions to an equation with measurable coefficients like \eqref{e:measurable_coefficients} are known to be H\"older continuous. Solutions to a fully nonlinear parabolic equation like \eqref{e:fully_nonlinear} are known to be $C^{1+\alpha}$, for some $\alpha>0$ depending on dimension and the ellipticity parameters $\lambda$ and $\Lambda$. Without further assumptions, there is no regularity estimate that ensures $D^2u$ to be well defined.

There are some important examples that show that our currently known regularity results are sharp for elliptic equations in non-divergence form. These examples can also be interpreted as singular solutions to fully nonlinear parabolic equations that are constant in time. However, what we seek in this work is to understand whether a solution to a parabolic equation may start smooth and flow into a singularity after some finite positive time. We are interested in constructing a solution to a parabolic equation which has an isolated singularity in space-time.

For elliptic equations in three dimensions, M. Safonov constructs an example in \cite{safonov} showing that the H\"older continuity result in his joint work with Krylov \cite{ks} is optimal. Precisely, for any $\alpha \in (0,1)$, he constructs a function $u : \R^3 \to \R$, homogeneous of degree $\alpha$, smooth in $\R^3 \setminus \{0\}$, so that
\begin{equation} \label{e:elliptic_measurable}
a_{ij}(x) \partial_{ij} u(x) = 0 \text{ in } \R^3 \setminus \{0\},
\end{equation}
for some uniforly elliptic coefficients $a_{ij}$. Moreover, he shows that this function $u$ can be approximated with smooth functions satisfying a uniformly elliptic equation in the full space. The singular function $u$ is also a viscosity solution to the corresponding inequalities for the Pucci operators (see Section \ref{s:preliminaries} below) in the full space $\R^3$.

For fully nonlinear elliptic equations, there is a series of examples of homogeneous solutions with an isolated singularity at the origin (see \cite{NV1,NV2,NV3,NV4,NV5,NV6,NVT}). In particular, in \cite{NVT}, Nadirashvili, Tkachev and Vl\u{a}du\c{t} construct a solution to a fully nonlinear elliptic equation with an isolated singularity in dimension five. All these examples are homogeneous functions. 

Let us state our first main result concerning equations with \emph{measurable} coefficients.
\begin{thm} \label{t:main}
For any $\alpha \in (0,4)$, there exists a continuous function $u : (-\infty,0] \times \R^2 \to \R$ such that
\begin{itemize}
 \item The function $u$ is parabolic-homogeneous of degree $\alpha$. In other words, for all $a>0$,
 \[ u(t,x) = a^{-\alpha} u(a^2t,ax).\]
 \item The function $u$ is analytic in $((-\infty,0] \times \R^2) \setminus \{(0,0)\}$ and satisfies an equation of the form \eqref{e:measurable_coefficients} for some uniformly elliptic coefficients $a_{ij}(t,x)$.
\end{itemize} 
\end{thm}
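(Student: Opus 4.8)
The plan is to construct $u$ explicitly in the spirit of Safonov's elliptic example, but adapted to the parabolic scaling and reduced to two space dimensions by exploiting the extra time variable. The parabolic homogeneity $u(t,x) = a^{-\alpha} u(a^2 t, ax)$ forces the natural coordinates to be the parabolic polar coordinates $r = |x|$, the angle $\theta$ on the circle, and the self-similar ratio $s = r^2/(-t) \in [0,\infty)$ (on $(-\infty,0) \times \R^2$), together with the homogeneity degree. So I would look for $u$ of the form $u(t,x) = (-t)^{\alpha/2} \, \phi(s,\theta)$ for a suitable profile $\phi$, with a separate check at $t=0$ where $u(0,x) = |x|^\alpha \psi(\theta)$ for the corresponding limit profile. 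The first step is to substitute this ansatz into a general equation $u_t = a_{ij}\partial_{ij}u$ and see what PDE (or rather, what algebraic-differential constraint) the profile $\phi$ must satisfy, treating the coefficients $a_{ij}(t,x)$ — which are themselves allowed to be parabolic-homogeneous of degree $0$, i.e. functions of $(s,\theta)$ — as unknowns to be solved for alongside $\phi$.

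The key idea, exactly as in the elliptic case, is that we have a great deal of freedom: at each point $(t,x) \neq (0,0)$ we only need to exhibit \emph{some} matrix $a_{ij}$ with $\lambda \mathrm I \le \{a_{ij}\} \le \Lambda \mathrm I$ for which $u_t - a_{ij}\partial_{ij}u = 0$ holds at that point. Writing $D^2 u$ in an orthonormal frame adapted to the level sets of $u$, the equation $a_{ij}\partial_{ij} u = u_t$ is a single linear equation in the entries of $a$. Such an equation is solvable with a uniformly elliptic $a$ as long as the eigenvalues $\mu_1,\dots,\mu_d$ of $D^2u$ and the value $u_t$ satisfy a sign/spread compatibility condition — roughly, $u_t$ must lie between $\lambda(\sum_{\mu_k>0}\mu_k) + \Lambda(\sum_{\mu_k<0}\mu_k)$ type bounds, equivalently the Pucci inequalities $\mathcal M^-(D^2u) \le u_t \le \mathcal M^+(D^2u)$ must hold with strict inequality (or be handled by a limiting argument where equality occurs, e.g. along the time axis or at isolated angles). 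So the real task reduces to: \textbf{design the profile $\phi$ (hence $u$) so that the trace-type inequalities relating the Hessian eigenvalues and the time derivative are satisfied at every point off the origin.} I would first do this in the rotationally symmetric reduction $u(t,x) = (-t)^{\alpha/2}g(s)$; compute the two Hessian eigenvalues (radial and tangential) and $u_t$ in terms of $g$, $g'$, $g''$; and then choose $g$ (for instance a carefully tuned combination behaving like $s^{\alpha/2}$ as $s \to \infty$ to match $|x|^\alpha$ at $t=0$, and smooth as $s\to 0$) so that the Pucci inequalities hold with room to spare.

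The delicate point — and this is where the restriction to $\alpha \in (0,4)$ and to dimension $d=2$ should enter — is controlling the sign pattern of the tangential versus radial second derivatives together with the $u_t$ term near $t=0$ (large $s$), where $u$ must look like the elliptic $\alpha$-homogeneous function $|x|^\alpha$ whose Hessian has eigenvalues $\alpha|x|^{\alpha-2}$ and $\alpha(\alpha-1)|x|^{\alpha-2}$: the second is negative for $\alpha<1$, and the ratio $|\alpha-1|$ to $1$ must stay within the window allowed by $\lambda/\Lambda$, which is not automatic. The parabolic term $u_t$, however, is an extra negative (since $\alpha>0$ and $(-t)^{\alpha/2}$ decreases in $t$) quantity we can \emph{add}, and by tuning how fast $g$ transitions from its $s\to\infty$ behavior we can arrange that the singular behavior is milder in the parabolic interior than in the elliptic trace — this is precisely why two space dimensions suffice here whereas Safonov needed three. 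I would therefore (i) pick an explicit one-parameter family $g_\kappa$, (ii) verify the Pucci inequalities $\mathcal M^-_{\lambda,\Lambda}(D^2 u) \le u_t \le \mathcal M^+_{\lambda,\Lambda}(D^2 u)$ hold off the origin for a suitable $\lambda,\Lambda$ (possibly $\kappa$-dependent but bounded), (iii) at each off-origin point select an explicit $a_{ij}$ realizing equality, chosen to depend analytically on $(s,\theta)$ so that $u$ solves a genuine equation \eqref{e:measurable_coefficients} and is analytic away from $(0,0)$, and (iv) check continuity of $u$ up to and including $(0,0)$ and the homogeneity relation, both of which are immediate from the ansatz. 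The main obstacle I anticipate is step (ii): producing a profile that simultaneously matches the required $C^\alpha$-and-no-better behavior at the origin and keeps the Hessian-eigenvalue-to-$u_t$ compatibility within a fixed ellipticity ratio for the \emph{full} range $\alpha \in (0,4)$, rather than only for $\alpha$ close to the "easy" exponents.
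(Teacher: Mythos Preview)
Your framework is the same as the paper's: a radially symmetric, parabolic-homogeneous ansatz, reduction to the Pucci inequalities $P^-(D^2u)\le u_t\le P^+(D^2u)$, and selection of $\lambda,\Lambda$ a posteriori. But what you have written is a plan, not a proof: the entire content of the theorem lies in actually producing the profile, and you defer that to ``pick an explicit one-parameter family $g_\kappa$'' without writing one down. The paper's proof consists almost entirely of the single formula
\[
u(t,x)=\frac{|x|^2+t}{(|x|^2-t)^{1-\alpha/2}},
\]
which in your self-similar variable $s=r^2/(-t)$ is $u=(-t)^{\alpha/2}\,(s-1)/(s+1)^{1-\alpha/2}$. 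Once this is written, the verification is two lines: a direct computation gives
\[
u_t=(r^2-t)^{\alpha/2-2}\Big(\big(2-\tfrac\alpha2\big)r^2-\tfrac\alpha2 t\Big),\qquad
\frac{u_r}{r}=(r^2-t)^{\alpha/2-2}\big(\alpha r^2-(4-\alpha)t\big),
\]
and for $\alpha\in(0,4)$ and $t\le 0$ both expressions are \emph{strictly positive}. By homogeneity it suffices to check the Pucci inequalities on the compact slice $\{r^2-t=1,\ t\le 0\}$; since $\min u_t>0$ one takes $\lambda$ small enough that $u_t\ge\lambda\,(u_r/r)+\lambda\,u_{rr}^+\ge P^-(D^2u)$, and since $\min(u_r/r)>0$ one takes $\Lambda$ large enough that $u_t\le\Lambda\,(u_r/r)-\lambda\,u_{rr}^-\le P^+(D^2u)$. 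That is the whole argument.

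Your discussion of the ``delicate point'' is therefore aimed at the wrong difficulty. You anticipate a subtle balance between the two Hessian eigenvalues near $t=0$ (where $u\sim|x|^\alpha$, with eigenvalues $\alpha|x|^{\alpha-2}$ and $\alpha(\alpha-1)|x|^{\alpha-2}$) and a careful tuning of the transition in $g$. In fact the radial eigenvalue $u_{rr}$ never needs to be controlled at all --- its sign is irrelevant --- and there is no matching or transition analysis. The only thing that matters is the strict positivity of $u_t$ and of the \emph{tangential} eigenvalue $u_r/r$, and the range $\alpha\in(0,4)$ is exactly what makes the coefficients $2-\alpha/2$ and $4-\alpha$ (hence both quantities) positive. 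Finally, your step (iii) about choosing $a_{ij}$ to depend analytically on $(s,\theta)$ is not needed: the theorem asks for $u$ to be analytic off the origin, not the coefficients, and the Pucci inequalities together with Corollary~\ref{c:pucci} already deliver some measurable uniformly elliptic $a_{ij}$ at each point.
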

Theorem \ref{t:main} shows that the H\"older continuity regularity obtained by Krylov and Safonov in \cite{ks} is not improvable for parabolic equations in dimension two or more, even if we impose regularity on the initial data. Note that unlike Safonov's example, our construction can be done \textbf{in dimension two}. This is a stark difference with respect to the elliptic case. Indeed, there is an old result by Nirenberg in \cite{nirenberg} proving that uniformly elliptic equations in 2D are always $C^{1+\alpha}$ regular, for some $\alpha>0$. Safonov's example takes advantage of a purely three dimensional geometric construction. Our example for Theorem \ref{t:main} is three dimensional in space-time.

Our second main result is about a solution to a fully nonlinear parabolic equation like \eqref{e:fully_nonlinear}. We present a solution which has an isolated singularity in space-time. Our justification has some part that is verified by a numerical computation. Because of that, it may be inappropriate to call it a \emph{Theorem}. Still, we believe our verification is sufficiently covincing so that we can confidently state it as a true fact.

\begin{fact} \label{f:main2}
There exists a function $u : Q_1 \to \R$ such that $u$ is analytic everywhere except at $(0,0)$. It solves a fully nonlinear uniformly parabolic equation like \eqref{e:fully_nonlinear}, and yet $u$ is not second differentiable in space at $(0,0)$.

Here $Q_1 := (-1,0] \times B_1$.
\end{fact}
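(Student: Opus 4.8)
The plan is to build the example by perturbing a homogeneous singular solution. Start from a function $P(x)$ on $\R^d$ that is a homogeneous-degree-two solution of a fully nonlinear uniformly elliptic equation $F(D^2 P)=0$ with an isolated singularity at the origin, so that $P$ is analytic away from $0$ but $D^2P$ is genuinely discontinuous (blows up in oscillation) there — such $P$ exist in high enough dimension by the constructions of Nadirashvili--Tkachev--Vl\u{a}du\c{t} quoted in the introduction. The naive guess $u(t,x)=P(x)$ is already a time-independent solution with a singular set $\{x=0\}$, but that singular set is a whole line in space-time, not an isolated point. The idea is therefore to look for a solution of the form $u(t,x) = (-t)^{?}\,\Phi(x/\sqrt{-t})$ for $t<0$, matched to something smooth for the final slice, so that the self-similar profile concentrates the singularity at $(0,0)$; concretely one wants the singular behaviour $\sim P(x)$ to appear only in the limit $t\to 0^-$ and to be smoothed out for every $t<0$.

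The key steps, in order, would be: (1) Fix $d$ and the homogeneous singular elliptic solution $P$; record that $F$ can be taken to depend only on $D^2u$ and is uniformly elliptic. (2) Consider the backward self-similar ansatz and derive the ODE/PDE that the profile $\Phi$ must satisfy; the natural candidate is a solution of $-\Phi - \tfrac12 x\cdot \nabla \Phi - F(D^2\Phi)=0$, i.e. $\Phi$ is a steady state of the rescaled flow, whose spatial asymptotics at infinity are forced to match $P$. (3) Alternatively — and this is the route I expect actually works and where the \emph{numerical} verification enters — fix a smooth, compactly-modified profile $\Phi_0$ interpolating between $P$ (near spatial infinity, controlling the self-similar singularity) and a smooth function near $0$, define $u$ by the self-similar formula, and then \emph{compute} the operator $u_t - \Delta$-type expression to read off the pointwise Hessian of $u$; one then checks numerically that at each $(t,x)$ with $t<0$ the pair $(u_t(t,x), D^2u(t,x))$ lies on a uniformly elliptic hypersurface, i.e. that there is a uniformly elliptic $F$ with $u_t=F(D^2u)$, which amounts to verifying a monotonicity/convexity-type inequality between the values of $D^2u$ at points sharing the same $u_t$. (4) Verify analyticity of $u$ off $(0,0)$ — immediate from analyticity of $\Phi$ off $0$ and the self-similar change of variables, which is analytic for $t<0$ — and verify that $D^2u$ fails to have a limit at $(0,0)$, inherited from the corresponding failure for $P$. (5) Finally localize to $Q_1$: cut off $u$ outside $Q_1$ and adjust $F$ on the (compact, away-from-singularity) set where the cutoff acts, using uniform ellipticity to paste.

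The main obstacle is step (3): exhibiting a single globally-defined uniformly elliptic $F$ of the form $F(D^2u)$ that is simultaneously solved by the constructed $u$ at \emph{all} space-time points. Because $F$ may not depend on $(t,x)$, one must check the geometric consistency condition — whenever two space-time points yield the same Hessian they must yield the same $u_t$, and more quantitatively the graph $\{(D^2u,u_t)\}$ must extend to a Lipschitz, uniformly monotone function on symmetric matrices. This is exactly the kind of finite-dimensional inequality that is hard to certify by hand for an explicit but complicated profile, and it is why the statement is flagged as an ``(almost) Theorem'': the consistency is checked by sampling the space-time domain numerically and verifying the uniform ellipticity margins $\lambda,\Lambda$ with a safety gap, rather than proved in closed form. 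A secondary technical point is ensuring the cutoff in step (5) does not destroy uniform ellipticity — this is routine since the cutoff region is compact and bounded away from the singularity, so $D^2u$ is bounded there and $F$ can be extended by, e.g., a Pucci operator that agrees with the constructed $F$ on the relevant matrix range.
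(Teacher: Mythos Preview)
Your proposal has a genuine structural gap: the backward self-similar ansatz $u(t,x)=(-t)\,\Phi(x/\sqrt{-t})$ with $\Phi\sim P$ at infinity forces $u(0,x)=P(x)$ to be homogeneous of degree two, and this is \emph{exactly} what Theorem~\ref{t:impossibility} forbids. Any construction whose terminal trace is a degree-two homogeneous singular elliptic solution is ruled out, so steps (2)--(3) as written cannot succeed no matter how the profile is modified near the origin. The paper's example is deliberately \emph{not} self-similar and its trace at $t=0$ is \emph{not} homogeneous: the explicit function is
\[
u(t,x)=\frac{P_5(x)}{-t+\sqrt{|x|^2+t^2}}+\frac{1}{12}P_5(x),
\]
where $P_5$ is the cubic Cartan polynomial in $\R^5$. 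At $t=0$ this gives $P_5(x)/|x|+\tfrac{1}{12}P_5(x)$, a sum of a degree-two and a degree-three term; the second summand is there precisely to defeat the obstruction of Theorem~\ref{t:impossibility}. The denominator $-t+\sqrt{|x|^2+t^2}$ is also not parabolically homogeneous, which is needed since (as the introduction explains) a parabolic-homogeneous $u$ would have $u_t$ unbounded near the origin.

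You do correctly identify the consistency criterion in step~(3): the paper formalizes it as Lemma~\ref{l:fn_pucci} and the equivalent ratio condition of Corollary~\ref{c:fn_ratio}. However, the numerical verification is not done by sampling pairs of points --- the paper explains that this fails badly because the violating set can have high codimension (and gives two explicit false positives, \eqref{e:candidate1} and \eqref{e:candidate2}). Instead one runs a stochastic gradient ascent on the ratio \eqref{e:ratio_condition} from many random initial pairs and checks that it stays below $C\approx 14$. Finally, your step~(5) is unnecessary: the explicit $u$ is already analytic on $Q_1\setminus\{(0,0)\}$, so no cutoff or patching of $F$ is needed.
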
 

Unlike every example constructed so far for elliptic equations, the function $u$ in (almost)-Theorem \ref{f:main2} is not homogeneous. In fact, there is no singular solution $u$ that is homogeneous in $x$ on $t=0$. This is arguably unexpected, so we presented it as our third main result.

\begin{thm} \label{t:impossibility}
For any dimension, there exists no function $u$ solving the equation \eqref{e:fully_nonlinear} in $Q_1$ so that $u(0,x)$ is homogeneous of degree two in $x$, $u$ is analytic in $Q_1 \setminus (0,0)$, but $u$ is not $C^2$ at the origin.
\end{thm}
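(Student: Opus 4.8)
My plan is to prove that any $u$ satisfying the hypotheses must be a polynomial in $(t,x)$, hence analytic — in particular $C^2$ — at the origin, contradicting the last assumption. The first step is to upgrade the homogeneity of the trace $u(0,\cdot)$ to parabolic homogeneity of $u$ itself. For $a\in(0,1]$ set $u_a(t,x):=a^{-2}u(a^2t,ax)$; by the translation invariance of $F$ and the parabolic scaling of \eqref{e:fully_nonlinear}, each $u_a$ solves \eqref{e:fully_nonlinear} on $Q_1$ and is analytic on $Q_1\setminus\{(0,0)\}$, and since $u(0,\cdot)$ is homogeneous of degree two we have $u_a(0,\cdot)=u(0,\cdot)$. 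I would then show $E:=u-u_a$ vanishes to infinite order in $t$ along $\{t=0\}$: since $E(0,\cdot)\equiv0$ we may write $E=t^NE_N$ with $E_N$ analytic off the origin, and writing $u_t-\partial_tu_a=F(D^2u)-F(D^2u_a)=\hat a_{ij}(t,x)\,\partial_{ij}E$ with $\hat a_{ij}$ bounded and uniformly elliptic (differentiating $F$, or the usual smoothed incremental-quotient coefficient if $F$ is merely Lipschitz) and matching with $\partial_tE=\partial_t(t^NE_N)$ gives $NE_N=t\,[\hat a_{ij}\partial_{ij}E_N-\partial_tE_N]$, so $E_N(0,\cdot)\equiv0$ and the order of vanishing improves by one; induction finishes it. Hence every derivative of $E$ vanishes at each $(0,x_0)$ with $x_0\neq0$, so by analyticity $E\equiv0$ near such points, and as $Q_1\setminus\{(0,0)\}$ is connected, $E\equiv0$. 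Thus $u(t,x)=a^{-2}u(a^2t,ax)$ for all $a\in(0,1]$, and $u$ extends by homogeneity to a parabolically $2$-homogeneous ancient solution of \eqref{e:fully_nonlinear} on $\big((-\infty,0]\times\R^d\big)\setminus\{(0,0)\}$.

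Next I would show $u_t$ is constant. Put $w:=u_t=F(D^2u)$; this is parabolically homogeneous of degree zero, and being analytic (hence bounded) on the compact parabolic unit sphere it is bounded everywhere, with $\sup w$ attained at some $z_1=(t_1,x_1)\neq(0,0)$ on that sphere. Differentiating \eqref{e:fully_nonlinear} in $t$ and using uniform ellipticity, $w$ satisfies the Pucci extremal inequalities $\mathcal M^-(D^2w)\le w_t\le\mathcal M^+(D^2w)$ on the punctured cylinder (again with incremental quotients if $F$ is only Lipschitz). By the strong maximum principle for the parabolic Pucci class, $w$ equals its maximum on the whole backward component of $z_1$, and then $0$-homogeneity forces $w$ to be constant on $\{t<0\}$, hence $w\equiv b$ for some $b\in\R$; thus $u_t\equiv b$.

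Integrating, $u(t,x)=bt+p(x)$ with $p:=u(0,\cdot)$ homogeneous of degree two. By hypothesis $u$ is analytic at every point $(t_0,0)$ with $-1<t_0<0$, so $p(x)=u(t_0,x)-bt_0$ is analytic at $x=0$; a function analytic at the origin and homogeneous of degree two is a homogeneous quadratic polynomial, since only its degree-two Taylor coefficients can be nonzero. Therefore $u(t,x)=bt+p(x)$ is a polynomial, analytic everywhere and in particular $C^2$ at $(0,0)$, contradicting the hypothesis. This proves the theorem.

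The main obstacle is the first step: turning scale invariance of the equation, homogeneity of the trace at $t=0$, and analyticity away from the origin into genuine parabolic homogeneity of $u$. The delicate point is propagating the agreement of the infinite $t$-jets along $\{t=0\}$ to an identity on the whole punctured cylinder; this uses essentially that the singular set is a single point (so $Q_1\setminus\{(0,0)\}$ stays connected) and that $u$ is truly analytic — not merely a weak or viscosity solution — off that point. The final step is soft, but it is exactly there that the hypothesis of a single singular point does its work: it excludes the Nadirashvili–Vlăduţ-type homogeneous-of-degree-two singular solutions of fully nonlinear \emph{elliptic} equations, whose time-independent extensions would be singular along the entire line $\{x=0\}$ rather than at one space-time point.
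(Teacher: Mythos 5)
Your proof is correct, and it takes a genuinely different route than the paper's. The paper never upgrades the hypothesis to parabolic homogeneity of $u$ itself. Instead, it proves directly that $u_t(0,\cdot)$ is \emph{constant} (Lemma~\ref{l:utconstant}, using that $D^2 u(0,\cdot)$ is $0$-homogeneous and $u_t$ is $C^\alpha$), normalizes $u_t(0,\cdot)=0$, and then shows $\partial_t^k u(0,x)=0$ for every $k$ and every $x\neq 0$ by a bootstrap: if $k$ is the smallest index for which $\partial_t^k u(0,\cdot)\not\equiv 0$ near some $x_0$, then a Taylor expansion in $t$ gives $u_t$ a definite sign for $t<0$ near some $(0,x_1)$, contradicting a local strong-maximum-principle lemma (Lemma~\ref{l:utsignchanging}: $u_t$ is either identically zero or sign-changing in every small backward cylinder touching $\{t=0\}$). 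Analyticity in $t$ then forces $u_t\equiv 0$, so the singularity would persist along $\{x=0,\,t<0\}$. Your argument, by contrast, first proves the structural identity $u=u_a$ for all scales $a$ by bootstrapping the order of vanishing of $E=u-u_a$ at $\{t=0\}$ and invoking analytic continuation, then extends $u$ to an ancient parabolically $2$-homogeneous solution, applies a \emph{global} strong maximum principle to the $0$-homogeneous function $u_t$ to get $u_t\equiv b$, and finishes by observing that $u(0,\cdot)$ must then be analytic at $0$, hence a homogeneous quadratic. Both proofs lean on analyticity and on the strong maximum principle for the Pucci class, but arrange them very differently: yours buys the stronger conclusion that $u$ is forced to be $bt+$(quadratic), while the paper's version is more local and lines up more naturally with the variant proved in Proposition~\ref{p:impossibility_C11}, where analyticity is traded for a backward uniqueness theorem with Lipschitz coefficients. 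One small presentational point in your Step 1: be explicit that the induction is on the order of vanishing $N$ (base case $E(0,\cdot)\equiv 0$ from $2$-homogeneity of $u(0,\cdot)$), and that the coefficient matrix $\hat a_{ij}$ extracted from uniform ellipticity of $F$ is merely bounded and measurable, which is all the conclusion $E_N(0,\cdot)\equiv 0$ requires.
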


There are some non-existence results for elliptic equations in the literature that are worth mentioning, and comparing them with Theorem \ref{t:impossibility}. 

There is a result by Han, Nadirashvili and Yuan \cite{HanNadirashviliYuan} proving that there exists no singular solution of \eqref{e:elliptic_measurable} that is homogeneous of degree one. Note that the examples that Safonov constructs in \cite{safonov} are homogeneous of degree strictly below one. Our examples in Theorem \ref{t:main} can be homogeneous of degree one or even larger (but for parabolic equations).

Thanks to the result in \cite{nadirashvili2013}, we know that there exist no singular solution to a fully nonlinear elliptic equation of the form $F(D^2u) = 0$, homogeneous of degree two and analytic away from the origin, in dimensions four or less. Even though this result shows there is a serious obstruction  to build a singular solution similar to that of \cite{NVT}, it does not rule out the existence of singular solutions that are homogeneous of a smaller degree, or not homogeneous at all.

It is also worth mentioning the result in \cite{NYuan} saying that a homogeneous solution of a fully nonlinear elliptic equation, of any degree other than two, in any dimension, must be a polynomial if $F \in C^1$.

In analogy with the elliptic constructions in \cite{nadirashvili2013} and the proof of Theorem \ref{t:main}, it would seem natural to try to build a parabolic-homogeneous solution of a fully nonlinear parabolic equation of the form \eqref{e:fully_nonlinear}. That is, a function $u : (-\infty,0] \times \R^d \to \R$ such that
\[ u(t,x) = \lambda^{-1-\alpha} u(\lambda^2 t,\lambda x) \text{ for all } \lambda >0. \]
Such a function would necessarily be $C^{1+\alpha}$ in space but not $C^{2+\alpha}$ on $t=0$. However, there is no function of this form that solves a fully nonlinear parabolic equation since its time derivative $u_t$ would fail to be bounded around the origin. It is well known that the time derivative of a solution to a fully nonlinear parabolic equation must be H\"older continuous, and in particular locally bounded. Singular solutions to a fully nonlinear parabolic equation as in (almost)-Theorem \ref{f:main2} cannot be parabolic-homogeneous. 

Every solution to a fully nonlinear parabolic equation is also a solution to an elliptic equation with right hand side $u_t$, for every fixed value of $t$. Since $u_t$ is H\"older continuous, any singular solution to a parabolic equation must agree with a singular solution to an elliptic equation with a H\"older continuous right hand side at the final time. After this observation, it would be natural to attempt to build such a singular solution by making $u(0,x)$ equal to some of the known examples of singular solutions to elliptic equations, for example the one from \cite{NVT}. However, all these known examples are homogeneous in $x$. Theorem \ref{t:impossibility} rules out any function of this form. Its proof involves an analysis of the time derivative $u_t$, but it is more subtle than the analysis above for parabolic-homogeneous functions.

In Section \ref{s:preliminaries} we present a few lemmas that characterize when a function $u$ solves an equation of the form \eqref{e:measurable_coefficients} for \emph{some} uniformly elliptic coefficients. In order to prove Theorem \ref{t:main}, we show a simple explicit formula for the function $u$ that satisfies the criteria established in Section \ref{s:preliminaries}. The main difficulty of proving Theorem \ref{t:main} is in finding the right function $u$. Once the explicit formula for $u$ is exposed, it is admittedly easy to verify it satisfies an equation like \eqref{e:measurable_coefficients}. 

The justification of (almost)-Theorem \ref{f:main2} is given in Section \ref{s:fully_nonlinear}. We prove a lemma characterizing the functions $u$ that solve some fully nonlinear parabolic equation. Then we write an explicit function that satisfies that condition. The verification of the condition is done numerically with the help of a computer. Because of that, it is not a complete analytical proof, but it is very convincing.

The proof of Theorem \ref{t:impossibility} is given in Section \ref{s:impossibility}. The idea is to use the homogeneity assumption together with the $C^\alpha$ estimates to prove that $u_t(0,x)$ is constant. The time derivative $u_t$ solves a uniformly parabolic equation with coefficients depending on $D^2 u$. A unique continuation result gives us backward uniqueness for that equation from which we determine that $u_t$ is constant everywhere in $Q_1$ and that leads to the proof.

The functions that realize the examples in Theorems \ref{t:main} and \ref{f:main2} are explicit. For Theorem \ref{t:main} it is
\[ u(t,x) = \frac{|x|^2+t}{(|x|^2-t)^{1-\alpha/2}}. \]
For (almost)-Theorem \ref{f:main2}, it is
\[ u(t,x) = \frac{P_5(x)}{-t + \sqrt{|x|^2+t^2}} + \frac 1 {12} P_5(x).\]
Here, $P_5$ is the isoperimetric Cartan cubic polynomial in dimension five, used in \cite{NVT}, given by the formula
\[ P_5(x) = x_1^3 + \frac 3 2 x_1 (x_3^2+x_4^2-2x_5^2-2x_2^2) + \frac{3\sqrt 3} 2 (x_2 x_3^2 - x_2 x_4^2 + 2 x_3 x_4 x_5) .\]

The equations are not explicit. We do not compute the coefficients $a_{ij}$ for Theorem \ref{t:main}, or the function $F$ for (almost)-Theorem \ref{f:main2}.

\begin{remark}
For historical reasons, we use the term \emph{measurable coefficients} to refer to an equation of the form \eqref{e:measurable_coefficients}. It is important that the coefficients $a_{ij}$ are uniformly elliptic but they do not satisfy any further continuity assumption. The measurability of these coefficients is largely irrelevant.

The equation is restated in terms of the Pucci operators in Section \ref{s:preliminaries}. Note that since the equation is in non-divergence form, the solutions cannot be understood in the sense of distributions. An appropriate way to make sense of whether a non-smooth function $u$ solves an equation like \eqref{e:measurable_coefficients} for some uniformly elliptic coefficients $a_{ij}$ is given by two inequalities that must be satisfied in the viscosity sense. The coefficients $a_{ij}$ are implicit, their pointwise values are not necessarily well defined.
\end{remark}

\subsection{Complementary results}

We include a few extra results that complement our main results above. They answer some natural related questions and they are proved through similar techniques.

Even though Theorem \ref{t:main} suggests that there might exist singular solutions to uniformly elliptic fully nonlinear parabolic equations in two dimensions, that is not the case. We state that fact in the first complementary result.

\begin{prop} \label{p:2d}
Let $B_1$ be the unit ball in $\R^2$ and $u :  Q_1 \to \R$ be a viscosity solution to a fully nonlinear equation of the form
\[ u_t - F(D^2 u) = 0 \text{ in } Q_1.\]
Assume $F$ is uniformly elliptic and $f \in C^\alpha(Q_1)$. Then $u$ is $C^{2+\alpha}$ in space and $C^{1+\alpha/2}$ in time for some $\alpha>0$. Moreover, it satisfies the estimates
\[ \|D^2_x u\|_{C^\alpha(Q_{1/2})} \leq C \|u\|_{C^0(Q_1)}.\]
\end{prop}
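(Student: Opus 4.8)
The plan is to freeze time and reduce the statement to the classical two-dimensional regularity theory for fully nonlinear \emph{elliptic} equations. Two facts will be used as black boxes. First, the interior parabolic estimates of Krylov--Safonov type (\cite{ks} and its refinements): a solution of $u_t-F(D^2u)=f$ with $f\in C^\alpha(Q_1)$ belongs to $C^{1,\alpha}$ in the parabolic sense on $Q_{3/4}$, and, as recalled in the introduction, its time derivative $u_t$ is H\"older continuous on $Q_{3/4}$, all with norms bounded by $C(\|u\|_{C^0(Q_1)}+\|f\|_{C^\alpha(Q_1)})$. Second, the planar estimate for fully nonlinear elliptic equations: if $w$ solves $F(D^2w)=g$ in $B_{3/4}\subset\R^2$ with $g\in C^\alpha$, then $w\in C^{2,\alpha}(B_{1/2})$ with $\|w\|_{C^{2,\alpha}(B_{1/2})}\le C(\|w\|_{C^0(B_{3/4})}+\|g\|_{C^\alpha(B_{3/4})})$. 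The latter follows from Nirenberg's planar $C^{1,\alpha}$ estimate for linear equations in nondivergence form (\cite{nirenberg}), applied to the first derivatives of a solution of the homogeneous equation $F(D^2w)=\mathrm{const}$, together with Caffarelli's perturbation argument to pass to a H\"older right-hand side. (If the equation in the statement is meant to be homogeneous, then $f\equiv 0$ and all $\|f\|$ terms disappear; otherwise the right-hand side of the asserted estimate should read $C(\|u\|_{C^0(Q_1)}+\|f\|_{C^\alpha(Q_1)})$.)

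\emph{Step 1 (slicing in time).} Since $u_t$ is continuous, for each fixed $t_0\in(-\tfrac12,0]$ the function $w:=u(t_0,\cdot)$ is a viscosity solution of $F(D^2w)=g_{t_0}$ in $B_{3/4}$, where $g_{t_0}(x):=u_t(t_0,x)-f(t_0,x)$; this is the standard fact that a viscosity solution of a parabolic equation with continuous time derivative solves, at each fixed time, the corresponding elliptic equation. By the first input, $\|g_{t_0}\|_{C^\alpha(B_{3/4})}\le C(\|u\|_{C^0(Q_1)}+\|f\|_{C^\alpha(Q_1)})$ uniformly in $t_0$. The second input then gives $u(t_0,\cdot)\in C^{2,\alpha}(B_{1/2})$ with $\|D^2_x u(t_0,\cdot)\|_{C^\alpha(B_{1/2})}$ bounded by the same quantity, uniformly in $t_0$. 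Thus $D^2_x u\in L^\infty(Q_{1/2})$ and $D^2_x u$ is $\alpha$-H\"older in $x$, uniformly in $t$ --- this is already the asserted spatial regularity.

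\emph{Step 2 (time regularity and bootstrap).} Interpolating the spatial $C^{2,\alpha}$ bound of Step 1 against the $C^{\alpha/2}$-in-time modulus of $D_x u$ coming from the parabolic $C^{1,\alpha}$ estimate shows that $D^2_x u$ is $C^\beta$ in time for some $\beta>0$, hence jointly continuous on $Q_{1/2}$. Consequently $a_{ij}(t,x):=F_{ij}(D^2u(t,x))$ is continuous (when $F$ is merely uniformly elliptic, replace $F_{ij}$ by the incremental-quotient coefficients $\int_0^1 F_{ij}\big((1-s)D^2u(t,x)+sD^2u(t,x+he)\big)\dd s$, which converge uniformly as $h\to 0$). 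Now $u$ solves a linear uniformly parabolic equation with continuous coefficients and right-hand side $f\in C^\alpha$, so the parabolic $W^{2,p}$ theory gives $u\in C^{1,\gamma}$, parabolically, for every $\gamma<1$; inserting this improved temporal modulus of $D_x u$ back into the interpolation above raises the temporal H\"older exponent of $D^2_x u$ up to $\alpha/2$. Hence $D^2_x u\in C^\alpha(Q_{1/4})$ parabolically and $u_t=F(D^2_x u)+f\in C^{\alpha/2}$ in time. Chaining the estimates of the three steps (modulo a harmless adjustment of the radii, recovered by a covering argument) yields the claimed bound on $\|D^2_x u\|_{C^\alpha(Q_{1/2})}$.

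\emph{Main obstacle.} The essential point --- and the only place where the restriction $d=2$ enters --- is the planar elliptic estimate invoked in Step 1. It cannot be replaced by any purely linear statement: Theorem~\ref{t:main} exhibits a linear parabolic equation with measurable coefficients in two space dimensions whose solution is not even $C^1$, so the fully nonlinear structure must genuinely be exploited \emph{inside each time slice}. Everything else is routine parabolic bookkeeping: the interpolation and $W^{2,p}$/Schauder bootstrap of Step 2, and the elementary lemma that a viscosity solution of a parabolic equation with continuous time derivative is, at each fixed time, a viscosity solution of the associated elliptic equation.
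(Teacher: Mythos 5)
Your Step 1 is exactly the paper's argument: use Theorem \ref{t:ks2} to make $u_t$ a $C^\alpha$ right-hand side, freeze $t$, and invoke the planar fully nonlinear elliptic $C^{2+\alpha}$ estimate (Theorem \ref{t:nirenberg-caffarelli}) to get spatial $C^{2+\alpha}$ regularity uniformly in $t$. Step 2, however, diverges from the paper and has a gap at the decisive point.

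Your interpolation bounds $v:=\nabla u(t,\cdot)-\nabla u(s,\cdot)$ in $L^\infty$ by $|t-s|^{\alpha/2}$ and $\nabla v=D^2u(t,\cdot)-D^2u(s,\cdot)$ in $C^\alpha$ by $O(1)$; the Hölder interpolation inequality then yields $\|\nabla v\|_\infty\lesssim |t-s|^{\frac{\alpha}{2}\cdot\frac{\alpha}{1+\alpha}}$, not $|t-s|^{\alpha/2}$. Upgrading $\nabla u$ to $C^\gamma$ in time (via $W^{2,p}$, assuming the coefficient continuity you need) only pushes this to $|t-s|^{\frac{\gamma}{2}\cdot\frac{\alpha}{1+\alpha}}$, which is strictly below $\alpha/2$ for every $\gamma<1$. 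So the claim that ``inserting this improved temporal modulus \dots\ raises the temporal H\"older exponent of $D^2_xu$ up to $\alpha/2$'' does not follow; the interpolation is capped below the target exponent no matter how far the $W^{2,p}$ bootstrap is pushed. (You do obtain \emph{some} positive temporal H\"older exponent, which may be enough for the letter of the proposition, but the reasoning as written overclaims.)

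What closes the gap in the paper is a sharper comparison done one derivative lower, with a time-linear correction. Fix $x$, set $r^2=|t-s|$, and on $B_r(x)$ consider $w(y):=u(t,y)-u(s,y)-(t-s)\,u_t(t,x)$. Because $u_t$ is $C^\alpha$ in the \emph{parabolic} metric, $\|w\|_{L^\infty(B_r(x))}\le Cr^{2+\alpha}$, while $[D^2w]_{C^\alpha(B_r(x))}\le C$ from Step 1. Comparing second-order Taylor polynomials of $u(t,\cdot)$ and $u(s,\cdot)$ at $x$ (equivalently, interpolating $w$ on the scale $r$), one reads off directly that the quadratic coefficient satisfies $|D^2u(t,x)-D^2u(s,x)|\le Cr^\alpha=C|t-s|^{\alpha/2}$. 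The subtraction of $(t-s)u_t(t,x)$ is what gains the two extra powers of $r$ and makes the scale-$r$ interpolation homogeneous; working with $v=\nabla u(t,\cdot)-\nabla u(s,\cdot)$ alone, as you do, cannot reproduce this. I would replace the interpolation-and-bootstrap in your Step 2 with this Taylor-expansion comparison.

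Two minor remarks. First, your repair of the coefficient continuity via incremental-quotient averages $\int_0^1 F_{ij}(\cdot)\,ds$ is not obviously legitimate when $F$ is merely uniformly elliptic: $F_{ij}$ is then only an $L^\infty$ function, and the averaged coefficients need not depend continuously on the endpoints of the segment. (This point becomes moot once the direct Taylor argument is used, since that bootstrap is not needed.) Second, the statement in the paper has a typo (the extraneous hypothesis ``$f\in C^\alpha$''); the equation is homogeneous, so, as you note, the $\|f\|$ terms drop out.
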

We use the notation $Q_r = (-r^2,0]\times B_r$ to denote the parabolic cylinder centered at the origin.

The proof of Proposition \ref{p:2d} is simpler than it may seem at first sight. It is based on the following observation. The time derivative of the solution to a uniformly parabolic equation is known to be H\"older continuous in any dimension. Thus, the fully nonlinear parabolic equation can be thought of as an elliptic one, for every frozen value of $t$, with a H\"older continuous right hand side. Fully nonlinear elliptic equations in 2D are known to have $C^{2+\alpha}$ solutions. The proof of Proposition \ref{p:2d} follows then by combining these well known tools. Given the simplicity of its proof, it is difficult to assert whether Proposition \ref{p:2d} is new or not. We could not find it in the literature. We include it in this paper because it complements our first main result: Theorem \ref{t:main}.

We are able to extend Theorem \ref{t:impossibility} to other degrees of homogeneity provided that $F$ is homogeneous of degree one.

\begin{prop} \label{p:impossibility_homogeneous}
If we assume futher than $F$ is homogeneous of degree one, then there exists no function $u$ solving an equation like \eqref{e:fully_nonlinear}, in any dimension, so that $u(0,x)$ is homogeneous of degree less than two in $x$, and $u$ is analytic in $Q_1 \setminus (0,0)$.
\end{prop}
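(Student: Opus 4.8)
The plan is to adapt the proof of Theorem \ref{t:impossibility} to the case where $u(0,x)$ is homogeneous of degree $\beta < 2$, using the extra assumption that $F$ is positively homogeneous of degree one. The first step is to reduce to the case where $\beta \in [0,2)$: if $\beta < 0$ then $u(0,x)$ blows up at the origin, contradicting that a solution to a uniformly parabolic equation is at least $C^{1+\alpha}$ in space; if $\beta \in (0,1)$ then $u(0,\cdot) \in C^\alpha$ forces $u$ to be smooth near the origin by the regularity theory as soon as one knows it is bounded, so really the interesting range is $\beta \in [1,2)$, where $u(0,\cdot)$ is $C^{1+\alpha}$ but a priori not $C^2$. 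I would keep all cases in $[0,2)$ uniformly, since the argument is the same.

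The heart of the matter is to understand $u_t$. As in Theorem \ref{t:impossibility}, $u_t$ is H\"older continuous (Krylov--Safonov type estimates) and, away from the origin where $u$ is analytic, $v := u_t$ solves the linear uniformly parabolic equation $v_t - a_{ij}(t,x)\partial_{ij} v = 0$ with $a_{ij} = \partial_{p_{ij}} F(D^2 u)$. The key new input is a scaling relation. Since $u(0,x)$ is homogeneous of degree $\beta$ and $F$ is homogeneous of degree one, the parabolic rescaling $u_\lambda(t,x) := \lambda^{-\beta} u(\lambda^{2} t, \lambda x)$ again solves $(u_\lambda)_t = F(D^2 u_\lambda)$ when $\beta = 2$; for $\beta \neq 2$ the degree-one homogeneity of $F$ is exactly what is needed to absorb the mismatch in powers, because $D^2_x u_\lambda = \lambda^{2-\beta} (D^2 u)(\lambda^2 t,\lambda x)$ and $(u_\lambda)_t = \lambda^{2-\beta} u_t(\lambda^2 t,\lambda x)$ scale the same way, so $(u_\lambda)_t - F(D^2 u_\lambda) = \lambda^{2-\beta}\bigl(u_t - F(D^2 u)\bigr)\circ(\lambda^2 t,\lambda x) = 0$. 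Evaluating at $t=0$ and using that $u(0,\cdot)$ is homogeneous of degree $\beta$, one gets a family of solutions agreeing with $u$ on $\{t=0\}$; letting $\lambda \to 0$ and using the $C^{1+\alpha}$ compactness, I would extract a limit that is parabolic-homogeneous of degree $\beta$, analytic away from the origin (by the same unique-continuation / backward-uniqueness argument used in Theorem \ref{t:impossibility}), and solves the same type of equation. But a parabolic-homogeneous solution of degree $\beta < 2$ has $u_t$ homogeneous of degree $\beta - 2 < 0$, hence unbounded near the origin, which is impossible since $u_t$ must be locally bounded (H\"older continuous). This is precisely the obstruction already pointed out in the introduction for parabolic-homogeneous solutions, now imported into the present setting. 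Unless the limit is trivial (a polynomial), we get a contradiction; and if the limit is a polynomial, one feeds that back to conclude $D^2 u(0,\cdot)$ extends continuously through the origin, and then backward uniqueness for $v = u_t$ propagates the regularity to all of $Q_1$, contradicting that $u$ is not $C^2$ at the origin.

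I expect the main obstacle to be the blow-up/compactness step: making rigorous that the rescalings $u_\lambda$ converge (along a subsequence) to a genuine solution of a uniformly parabolic equation of the same class, that the limit inherits analyticity away from the origin, and that the limit is nontrivial — i.e.\ that the singularity does not simply disappear in the limit. Controlling the limiting equation requires passing to the limit in the (nonlinear) operator, which is standard for viscosity solutions via stability, but identifying the limit's regularity away from $0$ needs the unique continuation input from Theorem \ref{t:impossibility}'s proof applied to the blow-up. The degree restriction $\beta < 2$ is used twice: once to make $u_t$ of negative homogeneity in the limit, and once (implicitly) to guarantee $u(0,\cdot)$ is not already forced to be a quadratic polynomial. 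The case $\beta = 2$ is of course exactly Theorem \ref{t:impossibility} and is not reproved here.
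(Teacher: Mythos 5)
Your proposal contains the right seed but takes an unnecessarily complicated and, as written, gapped route. The crucial observation is buried near the end of your second paragraph: a degree-$\beta$ homogeneous $u$ with $\beta<2$, when fed through a degree-one homogeneous $F$, produces a $u_t$ of negative homogeneity degree, incompatible with boundedness. But you apply this to a hypothetical blow-up limit, when in fact it applies directly to $u$ at $t=0$ with no blow-up at all. Since $D^2u(0,\cdot)$ is homogeneous of degree $\beta-2$ and $F$ is homogeneous of degree one, the equation gives
\[
u_t(0,ax) = F(D^2 u(0,ax)) = F(a^{\beta-2} D^2u(0,x)) = a^{\beta-2}\, u_t(0,x),
\]
so $u_t(0,\cdot)$ is homogeneous of degree $\beta-2<0$ on the slice $t=0$. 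Combined with the interior $C^\alpha$ estimate for $u_t$ (Theorem \ref{t:ks2}), this forces $u_t(0,\cdot)\equiv 0$. This is the paper's Lemma \ref{l:utconstant_homogeneous}, replacing Lemma \ref{l:utconstant}; from there the proof of Theorem \ref{t:impossibility} goes through verbatim (Lemma \ref{l:utsignchanging} plus the analyticity argument give $u_t\equiv 0$, so $u$ is constant in $t$ and a singularity at $(0,0)$ would propagate to all $(t,0)$, contradicting analyticity off the origin).

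Your blow-up route has real gaps you'd need to close: (i) you assert the rescalings $u_\lambda$ converge to a \emph{parabolic-homogeneous} limit, but this requires convergence along the full family (not just a subsequence) and is not justified; (ii) you claim the limit is analytic away from the origin ``by the same unique-continuation / backward-uniqueness argument,'' but in Theorem \ref{t:impossibility} analyticity is a \emph{hypothesis}, not a conclusion of unique continuation, so this step does not transfer; (iii) the possibility that the blow-up limit is trivial is acknowledged but not actually ruled out, only hand-waved. None of this is needed once you notice that the negative-degree homogeneity of $u_t(0,\cdot)$ is available directly, without rescaling. I'd also drop the preliminary case split over $\beta\in[0,2)$ vs.\ $\beta<0$ vs.\ $\beta\in(0,1)$: the single homogeneity computation above handles every $\beta<2$ uniformly.
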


The analyticity assumption on $u$ is also removable at the expense of further smoothness assumptions on $F$ and having $u$ globally defined.

\begin{prop} \label{p:impossibility_C11}
If we assume futher than $F$ a $C^{1,1}$ function, then there exists no function $u : (-1,0] \times \R^d \to \R$, in any dimension, $C^3$ away from $(0,0)$, solving an equation like \eqref{e:fully_nonlinear}, so that $u(0,x)$ is homogeneous of degree two in $x$, it is not $C^2$ at the origin, and $|(\partial_t,\nabla_x) D_x^2 u| \lesssim |x|^{-1}$.
\end{prop}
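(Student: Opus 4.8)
The plan is to adapt the proof of Theorem~\ref{t:impossibility}, replacing the analyticity-based unique continuation by a Carleman estimate that tolerates the mild singularity of the coefficients allowed by the bound $|(\partial_t,\nabla_x)D^2_x u|\lesssim|x|^{-1}$. The first step is unchanged. Since $u(0,x)$ is homogeneous of degree two and $u$ is $C^3$ away from $(0,0)$, the Hessian $D^2_x u(0,\cdot)$ is homogeneous of degree zero on $\R^d\setminus\{0\}$, and hence so is $u_t(0,\cdot)=F(D^2_x u(0,\cdot))$. But $u_t$ is H\"older continuous, by the standard a priori estimate for \eqref{e:fully_nonlinear}, so $u_t(0,\cdot)$ is continuous; a continuous function homogeneous of degree zero must be a constant, say $c$.

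Next I would write the linear equation satisfied by $v:=u_t-c$. Let $a_{ij}(t,x)$ be the partial derivatives $\partial F/\partial M_{ij}$ evaluated at $D^2_x u(t,x)$. Differentiating $u_t=F(D^2_x u)$ in $t$ — legitimate on $\big((-1,0]\times\R^d\big)\setminus\{(0,0)\}$ because there $u\in C^3$ and $F\in C^{1,1}$ — gives $v_t=a_{ij}\partial_{ij}v$ classically away from the origin. The matrix $\{a_{ij}\}$ is uniformly elliptic because $F$ is, and since $\partial F/\partial M_{ij}$ is Lipschitz (this is where $F\in C^{1,1}$ is used) and $D^2_x u$ is $C^1$ away from the origin, $a_{ij}$ is $C^1$ there with $|(\partial_t,\nabla_x)a_{ij}|\lesssim|(\partial_t,\nabla_x)D^2_x u|\lesssim|x|^{-1}$. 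The same hypothesis forces $D^2_x u$, hence $F(D^2_x u)$, hence $v$, to grow at most logarithmically as $|x|\to\infty$, and $v$ is continuous with $v(0,\cdot)\equiv 0$.

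The crux is to upgrade this to $v\equiv 0$ on all of $(-1,0]\times\R^d$, i.e.\ backward uniqueness. Away from the origin the coefficients are Lipschitz and a backward-uniqueness result of Escauriaza--Seregin--\v{S}ver\'ak type would apply, but $\nabla_{t,x}a_{ij}$ blows up like $|x|^{-1}$ at the origin, so one must argue directly. The plan is a Carleman inequality centered at $(0,0)$, adapted to the parabolic scaling: the error terms produced by the singular part of $\nabla_{t,x}a_{ij}$ carry an extra factor $|x|^{-1}$ and can be absorbed into the leading terms via the Hardy inequality, the inverse-linear singularity being subcritical (the gradient of the coefficients lies in weak-$L^d$, which is at the borderline where unique continuation is still available); the global-in-space hypothesis together with the logarithmic growth of $v$ makes the boundary terms at spatial infinity vanish. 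This yields backward uniqueness from $\{t=0\}$, hence $v\equiv 0$ and $u_t\equiv c$.

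It follows that $u(t,x)=g(x)+ct$ with $g:=u(0,\cdot)$ homogeneous of degree two. Fix any $t_0\in(-1,0)$; since $(t_0,0)\neq(0,0)$ and $u\in C^3$ there, $g=u(t_0,\cdot)-ct_0$ is $C^2$ at the origin. A function homogeneous of degree two that is twice differentiable at $0$ has $g(0)=0$ and $\nabla g(0)=0$, so $g(x)=\tfrac12 D^2g(0)(x,x)+o(|x|^2)$; comparing with $g(rx)=r^2 g(x)$ and letting $r\to0$ gives $g(x)=\tfrac12 D^2 g(0)(x,x)$, a quadratic polynomial. Hence $u$ is smooth at the origin, contradicting the hypothesis that $u$ is not $C^2$ there. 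I expect the Carleman/backward-uniqueness step to be the real obstacle: one must choose a weight adapted simultaneously to the parabolic scaling and to the $|x|^{-1}$ singularity of the coefficients and check that the Hardy-type absorption closes with admissible constants — the global hypothesis on $u$ and the quantitative bound $|(\partial_t,\nabla_x)D^2_x u|\lesssim|x|^{-1}$ are precisely what make this work in place of the analyticity assumption of Theorem~\ref{t:impossibility}.
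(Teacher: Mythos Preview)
Your approach coincides with the paper's: first use Lemma~\ref{l:utconstant} to get $u_t(0,\cdot)\equiv c$, then linearize in $t$ to obtain a uniformly parabolic equation for $v=u_t-c$ with coefficients $a_{ij}=\partial F/\partial M_{ij}(D^2u)$, then invoke backward uniqueness to force $v\equiv 0$, and conclude. The only difference is that where you propose to build the Carleman estimate by hand, the paper simply cites the backward uniqueness theorem of Wu--Zhang \cite{WuZhang}, reading the hypothesis $|(\partial_t,\nabla_x)D^2_xu|\lesssim|x|^{-1}$ as the decay condition on $\nabla a_{ij}$ at infinity needed there.

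Your sketch is in one respect more scrupulous than the paper's write-up: you flag that the same $|x|^{-1}$ bound makes $\nabla a_{ij}$ blow up at the spatial origin, so the coefficients are not globally Lipschitz, and you indicate how a Hardy-type absorption would handle this borderline singularity in the Carleman argument. The paper's proof glosses over this point. Conversely, citing \cite{WuZhang} is cleaner than re-deriving a Carleman inequality, and the role of the global (whole-space) hypothesis is transparent there: it is exactly what the Wu--Zhang result requires, which is why the analyticity assumption of Theorem~\ref{t:impossibility} can be dropped. Your concluding step, reducing $u(t,x)=g(x)+ct$ with $g$ homogeneous of degree two and $C^2$ at the origin to a pure quadratic, is the content of the paper's ``conclude as in the proof of Theorem~\ref{t:impossibility}.''
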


For the proof of Proposition \ref{p:impossibility_C11}, we replace the analyticity condition in Theorem \ref{t:impossibility} with a unique continuation argument for parabolic equations. The assumption that $F \in C^{1,1}$ arises in similarly as in \cite{scott2011} as a way to obtain a linearized equation with Lipchitz coefficients. The proof of Proposition \ref{p:impossibility_C11} relies on the backward uniqueness result from \cite{WuZhang}. Note that this type of uniqueness results for parabolic equations would not apply to solutions in a bounded domain.

\subsection*{Acknowledgments}

The author is supported by NSF grant DMS-1764285.

The computational component in the verification of (almost)-Theorem \ref{f:main2} was completed in part with resources provided by the University of Chicago Research Computing Center.

\section{Preliminaries}
\label{s:preliminaries}

In this section we review some standard notions for elliptic and parabolic equations in nondivergence form. None of the statements in this section is new.

Given any two ellipticity constants $\Lambda \geq \lambda >0$, we define the usual Pucci operators over the set of real symmetric matrices.
\begin{align*} 
	P^+(M) &:= \Lambda \tr M_+ - \lambda \tr M_-, \\
	P^-(M) &:= \lambda \tr M_+ - \Lambda \tr M_-.
\end{align*}

Here, we write $M_+$ and $M_-$ to represent the positive and negative parts of the symmetric matrix $M$. We use the convention that both $M_+$ and $M_-$ are positive definite. Thus, $M_-=-M$ when $M$ is negative definite.

The Pucci operators represent the extremal elliptic operators with ellipticity constants $\lambda$ and $\Lambda$. In fact, the following identity holds.
\begin{align} 
	P^+(M) &= \sup \left\{\tr (AM) : \lambda \mathrm{I} \leq A \leq \Lambda \mathrm{I} \right\}, \label{e:Pmin}\\
	P^-(M) &= \inf \left\{\tr (AM) : \lambda \mathrm{I} \leq A \leq \Lambda \mathrm{I} \right\}. \label{e:Pmax}
\end{align}
Identities \eqref{e:Pmin} and \eqref{e:Pmax} are very well known. Probably because of their simplicity, their proofs are most often omitted. Let us write a quick justification. The first thing to notice in order to prove \eqref{e:Pmin} is that it holds when $M$ is positive definite. Indeed, in that case we have 
$\tr AM = \tr M^{1/2} A M^{1/2}$ and $ \lambda M = M^{1/2} \lambda M^{1/2} \leq M^{1/2} A M^{1/2} \leq M^{1/2} \Lambda M^{1/2} = \Lambda M$. Thus, $\lambda \tr M \leq \tr AM \leq \Lambda M$ when $M$ is positive definite. If $M$ is not positive definite, we write $M = M_+ - M_-$, use the previous inequality for $M_+$ and $M_-$ and the linearity of the trace to get
\[ \tr AM = \tr AM_+ - \tr AM_- \leq \Lambda \tr M_+ - \lambda \tr M_- = P^+(M).\]
Similarly, we also get $\tr AM \geq P^-(M)$. The equality in \eqref{e:Pmin} is achieved when $A$ equals $\Lambda$ times the projector over the positive eigenvalues, plus $\lambda$ times the projector over its orthogonal complement. The equality in \eqref{e:Pmax} holds with the opposite choice of constants.

From the characterization of $P^+$ and $P^-$ as in (\ref{e:Pmin}-\ref{e:Pmax}), we deduce the following corollary.
\begin{cor} \label{c:pucci}
Given any symmetric matrix $M \in \R^{d \times d}$ the following two statements are equivalent
\begin{enumerate}
	\item There exists a symmetric matrix $A \in \R^{d \times d}$, so that $\lambda \mathrm{I} \leq A \leq \Lambda \mathrm{I}$, and $\tr A M = 0$.
	\item $P^+(M) \geq 0 \geq P^-(M)$.
\end{enumerate}
\end{cor}

There are different ways to consider non-smooth solutions to an equation like \eqref{e:measurable_coefficients}. One possibility is to consider a function $u$, whose derivatives in the sense of distributions $u_t$ and $\partial_{ij} u$ make sense as functions, at least in $L^1_{loc}$, and solve \eqref{e:measurable_coefficients} for some uniformly elliptic measurable coefficients $a_{ij}(t,x)$. Without any further regularity assumption on $u$, this notion of solution in the sense of distributions has some severe shotcomings (lack of existence and lack of uniqueness among others). It is most convenient to reformulate the equation \eqref{e:measurable_coefficients} in the viscosity sense using the Pucci operators. From Corollary \ref{c:pucci}, we see that a smooth function $u$ solves \eqref{e:measurable_coefficients} for \emph{some} uniformly elliptic coefficient $a_{ij}(t,x)$ if and only if
\begin{equation} \label{e:pucci}
u_t - P^+(D^2u) \leq 0 \text{ and } u_t - P^-(D^2u) \geq 0.
\end{equation}
The equation \eqref{e:measurable_coefficients} with undetermined rough coefficients $a_{ij}(t,x)$, turns out to be equivalent to the pair of nonlinear inequalities \eqref{e:pucci}. An advantage of \eqref{e:pucci} is that we can make sense of the inequalities in the viscosity sense, for functions $u$ that are merely continuous. The example we construct in this paper to verify Theorem \ref{t:main} is a function $u$ that remains smooth up to the point of the singularity. In that sense, it is not necessary for us to work with any generalized notion of solution. Yet, we verify the two inequalities in \eqref{e:pucci} rather than constructing the coefficients $a_{ij}(t,x)$ explicitly.

It is also common and useful to restate the notion of uniform ellipticity for a nonlinear function $F$ in terms of the Pucci operators. Indeed, the function $F$ is uniformly elliptic with parameters $\lambda$ and $\Lambda$ if and only if for any pair of symmetric matrices $A,B \in \R^{n \times n}$, it holds
\[ P^-(B) \leq F(A+B) - F(A) \leq P^+(B).\]
In this case, the inequalities hold for any two symmetric matrices $A$ and $B$, without requiring $B$ to be symmetric. It is easy to verify that the Pucci operators $P^+$ and $P^-$ are themselves uniformly elliptic with the same constants $\lambda$ and $\Lambda$.

\subsection{Review on regularity results for parabolic equations}

The following two theorems summarize the fundamental regularity results that follows from the work of Krylov and Safonov \cite{ks}. In the context of viscosity solutions, we also reference \cite{lihe1}, \cite{lihe2}, and the lecture notes \cite{MR3185332}.

\begin{thm} \label{t:ks1}
Let $u$ be a continuous function that satisfies the two inequalities \eqref{e:pucci} in the viscosity sense in $Q_1$. Then, for some $\alpha>0$ small depending on $\lambda$, $\Lambda$ and dimension only, $u \in C^\alpha(Q_{1/2})$ and
\[ \|u\|_{C^\alpha(Q_{1/2})} \leq C \|u\|_{C^0(Q_1)}.\]
The constant $C$ depends also on $\lambda$, $\Lambda$ and dimension only.
\end{thm}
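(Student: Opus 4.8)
The plan is to reduce everything to a single \emph{diminishing oscillation} step and then iterate by parabolic scaling. After subtracting a constant and rescaling it suffices to find universal constants $\rho,\theta \in (0,1)$ such that every $u$ satisfying \eqref{e:pucci} in $Q_1$ with $\osc_{Q_1} u \le 1$ obeys $\osc_{Q_\rho} u \le 1-\theta$. Applying this on the nested cylinders $Q_{\rho^k}$, summing a geometric series, and then covering $Q_{1/2}$ by cylinders obtained from the $Q_{\rho^k}$ by translating in $x$ and backward in $t$ — legitimate since \eqref{e:pucci} is translation invariant — yields $u \in C^\alpha(Q_{1/2})$ with $\alpha = \log(1-\theta)/\log\rho$ together with the stated quantitative bound.

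For the oscillation step I would first prove the basic measure-to-point estimate. Normalize $u$ to take values in $[0,1]$ on $Q_1$; then $u$ itself is a nonnegative viscosity supersolution, $u_t - P^-(D^2 u) \ge 0$, and so is $v := 1-u$, since $v_t - P^-(D^2 v) = -(u_t - P^+(D^2 u)) \ge 0$ using $P^-(-M) = -P^+(M)$. The estimate to establish is: if in addition $\inf_{Q'} v \le 1/2$ for a fixed bottom sub-cylinder $Q'$, then $|\{v \le c_0\} \cap Q_{1/2}| \ge \mu\,|Q_{1/2}|$ for universal $c_0,\mu > 0$. I would prove it by constructing an explicit smooth parabolic barrier $\varphi$ lying below $v$ on the parabolic boundary of an intermediate cylinder, strictly negative on $Q'$, and a strict subsolution off a small set, and then applying the \emph{parabolic Aleksandrov--Bakelman--Pucci estimate} to $v-\varphi$: the ABP inequality controls $\min(v-\varphi)$ in terms of the $L^{d+1}$ norm, over the contact set, of a quantity dominated by $v_t - P^-(D^2 v) \ge 0$ plus the barrier terms, and since that quantity vanishes where $v$ is a supersolution, the contact set must be large and must sit where $v$ is bounded above.

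Next I would bootstrap this to a power-decay estimate for the distribution function — $|\{v > M^k\}\cap Q_1| \le (1-\mu)^k$ for a universal $M>1$, equivalently an $L^\eps$ estimate $\|v\|_{L^\eps(Q_{1/2})} \le C\inf_{Q'} v$ for some small $\eps>0$ — via a parabolic Calder\'on--Zygmund cube decomposition. Applying this estimate to $v$ and, symmetrically, to $u$ itself (both nonnegative supersolutions after the normalization), one gets that one of the sets $\{u \le 1/2\}$, $\{u \ge 1/2\}$ fills a definite fraction of $Q_{1/2}$ with a quantitative bound on the value of $u$ there; feeding both facts back into the inequalities \eqref{e:pucci} forces $\osc_{Q_\rho} u \le 1-\theta$ and closes the loop.

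The step I expect to be the main obstacle is the parabolic Calder\'on--Zygmund argument that upgrades the single measure estimate to power decay. Unlike the elliptic case, the dyadic parabolic cubes must be \emph{stacked} in the time direction, because the basic estimate propagates smallness of $v$ only backward in time; one must check at each subdivision that the stacked cylinder still lies inside the domain and carefully track constants. This parabolic machinery is carried out in full in Krylov and Safonov \cite{ks}; for the viscosity-solution formulation used here I would follow \cite{lihe1,lihe2} or the exposition in \cite{MR3185332}, so in practice this theorem is quoted rather than reproved.
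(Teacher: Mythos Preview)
Your sketch is a faithful outline of the standard Krylov--Safonov argument (ABP barrier $\Rightarrow$ measure-to-point estimate $\Rightarrow$ stacked Calder\'on--Zygmund $\Rightarrow$ $L^\eps$/weak Harnack $\Rightarrow$ oscillation decay $\Rightarrow$ iteration), and matches what one finds in the references the paper cites. But note that the paper itself does not prove Theorem~\ref{t:ks1} at all: it is placed in the Preliminaries section, introduced as a summary of known results from \cite{ks}, \cite{lihe1}, \cite{lihe2}, \cite{MR3185332}, and is simply quoted without proof. Your own final sentence already anticipates this --- ``in practice this theorem is quoted rather than reproved'' --- and that is exactly what the paper does.
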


Applying Theorem \eqref{t:ks1} to incremental quotients of a solution to a fully nonlinear parabolic equation like \eqref{e:fully_nonlinear}, we obtain the following $C^{1+\alpha}$ estimate.

\begin{thm} \label{t:ks2}
Let $u$ be a continuous function that satisfies \eqref{e:fully_nonlinear} in the viscosity sense in $Q_1$. Assume that $F$ is uniformly elliptic. Then, for some $\alpha>0$ small depending on $\lambda$, $\Lambda$ and dimension only, $u \in C^{1+\alpha}(Q_{1/2})$ and
\[ \|(\partial_t u, \nabla u)\|_{C^\alpha(Q_{1/2})} \leq C \|u\|_{C^0(Q_1)}.\]
The constant $C$ depends also on $\lambda$, $\Lambda$ and dimension only.
\end{thm}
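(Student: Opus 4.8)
The plan is to run the classical incremental-quotient argument, exploiting that $F$ depends on neither $t$ nor $x$. For a unit vector $e \in \R^d$ and small $h \ne 0$ the function $u(t,x+he)$ solves \eqref{e:fully_nonlinear} on a slightly smaller cylinder, and likewise the time shift $u(t-\tau,x)$ solves it; since $F$ is uniformly elliptic (so $P^-(B) \le F(A+B)-F(A) \le P^+(B)$, as recalled above), the difference of two such solutions is a viscosity solution of the pair of Pucci inequalities \eqref{e:pucci}. For merely continuous functions this last fact is the standard one obtained from the parabolic theorem on sums. Thus Theorem~\ref{t:ks1} applies to these differences, and to their normalized quotients — but one cannot simply divide by $|h|$ (or $\tau$) and quote Theorem~\ref{t:ks1}, because the $C^0$ norm of the quotient is not yet under control. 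Supplying that control is the crux.

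To obtain the needed a priori Lipschitz bounds I would iterate Theorem~\ref{t:ks1} on second difference quotients. Writing $\delta_h u(t,x) = u(t,x+he) - u(t,x)$, one has $\delta_h^2 u(t,x) = \delta_h(\delta_h u)(t,x-he)$, and since $\delta_h u$ solves \eqref{e:pucci}, Theorem~\ref{t:ks1} gives $\|\delta_h^2 u\|_{L^\infty(Q_r)} \lesssim |h|^\alpha \|\delta_h u\|_{L^\infty(Q_{r'})}$ for $r < r'$; with the plain Hölder bound $\|\delta_h u\|_{L^\infty} \lesssim |h|^\alpha \|u\|_{C^0(Q_1)}$ this yields $\|\delta_h^2 u\|_{L^\infty} \lesssim |h|^{2\alpha}\|u\|_{C^0(Q_1)}$. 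A standard Zygmund-type embedding then upgrades the spatial regularity of $u$ to $C^{2\alpha}$, and re-inserting this into the estimate for $\delta_h^2 u$ gains a further factor $|h|^\alpha$ at each pass; after finitely many passes (choosing nested radii $1 > \dots > 1/2$, so only a fixed amount of cylinder and a fixed number of factors of the constant are lost) one reaches $\|\delta_h^2 u\|_{L^\infty(Q_{1/2})} \lesssim |h|^{1+\gamma}\|u\|_{C^0(Q_1)}$ for some $\gamma>0$, i.e. $u$ is $C^{1,\gamma}$ in $x$ with $\|\nabla_x u\|_{L^\infty(Q_{1/2})} \lesssim \|u\|_{C^0(Q_1)}$. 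The identical scheme run on the time-shift differences, with the parabolic scaling in which $\tau$ plays the role of $|h|^2$, yields that $u$ is Lipschitz in $t$ (indeed $C^{1,\gamma}$ in $t$) with the analogous bound.

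With these bounds the quotients $v_h := \delta_h u/|h|$ and $v_\tau := (u(t,x)-u(t-\tau,x))/\tau$ are uniformly bounded in $L^\infty(Q_{3/4})$ by $C\|u\|_{C^0(Q_1)}$, and each still solves \eqref{e:pucci}, so Theorem~\ref{t:ks1} gives $\|v_h\|_{C^\alpha(Q_{1/2})} + \|v_\tau\|_{C^\alpha(Q_{1/2})} \le C\|u\|_{C^0(Q_1)}$, uniformly in $h$ and $\tau$. Letting $h,\tau \to 0$ — via Arzelà–Ascoli, using the $C^1$ regularity already established to identify the limits with $\partial_{x_i} u$ and $\partial_t u$, and passing the $C^\alpha$ bound to the limit — shows that $\nabla_x u$ and $\partial_t u$ lie in $C^\alpha(Q_{1/2})$ (parabolic Hölder), which is precisely $u \in C^{1+\alpha}(Q_{1/2})$ with the stated estimate. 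Note that throughout we used only that translates and time shifts of $u$ solve \eqref{e:fully_nonlinear} and that Theorem~\ref{t:ks1} has right-hand side $C\|\cdot\|_{C^0}$, so no term involving $F(0)$ enters the bound.

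The main obstacle is the middle step: converting a Hölder estimate, which a priori controls only bounded quantities, into a Lipschitz and then $C^{1,\alpha}$ estimate for $u$ itself. The device — estimating second difference quotients and bootstrapping through the Zygmund embedding — is routine in principle, but it is where the parabolic scaling has to be tracked with care, especially in the time variable, where the increment $\tau$ must be weighted like $|h|^2$; a compactness/blow-up argument is an alternative route to the same Lipschitz bound.
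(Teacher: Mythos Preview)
The paper does not actually prove Theorem~\ref{t:ks2}; it is stated as a known consequence of Krylov--Safonov, with only the one-sentence remark that it follows by ``applying Theorem~\ref{t:ks1} to incremental quotients of a solution to a fully nonlinear parabolic equation.'' Your proposal is precisely a fleshed-out version of that remark: differences of translates satisfy \eqref{e:pucci} by the ellipticity of $F$, Theorem~\ref{t:ks1} controls them, and the second-difference bootstrap through the Zygmund embedding supplies the missing $L^\infty$ bound on the first quotients so that Theorem~\ref{t:ks1} can be applied once more to yield $C^\alpha$ regularity of $\nabla_x u$ and $\partial_t u$. This is the standard argument (cf.\ Caffarelli--Cabr\'e in the elliptic case, Wang in the parabolic case), and it is correct as you have outlined it.
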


The $C^\alpha$ estimate on the time derivative $u_t$ plays an important role in the proof of Theorem \ref{t:impossibility}. In \cite{changlara}, they prove that the estimate on $u_t$ holds even if we add a $C^\alpha$ right hand side to the equation.

The last theorem we want to reference in this section is a $C^{2+\alpha}$ estimate for fully nonlinear elliptic equations in 2D with a H\"older continuous right hand side.

\begin{thm} \label{t:nirenberg-caffarelli}
Let $u : B_1 \to \R$ be a viscosity solution of
\[ F(D^2 u) = f(x) \text{ in } B_1.\]
Here $B_1$ is the unit ball in $\R^2$ and $F$ is uniformly elliptic. Then, for some $\alpha>0$ small depending on $\lambda$, $\Lambda$ and dimension only, $u \in C^{2+\alpha}(B_{1/2})$ and
\[ \|u\|_{C^{2+\alpha}(B_{1/2})} \leq C \left( \|u\|_{C^0(B_1)} + \|f\|_{C^\alpha(B_1)} \right).\]
The constant $C$ depends also on $\lambda$, $\Lambda$ and dimension only.
\end{thm}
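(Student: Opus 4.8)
The plan is to combine a \emph{homogeneous} two-dimensional $C^{2+\bar\alpha}$ estimate with the standard Caffarelli perturbation iteration, so that the hypothesis $f \in C^\alpha$ enters only to make a geometric iteration converge; this is why $\alpha$ must be taken below a universal exponent $\bar\alpha$ --- the ``$\alpha$ small'' of the statement. I would first normalize, replacing $F(M)$ by $F(M) - F(0)$ and absorbing the constant into $f$, so that $F(0) = 0$. The Alexandrov--Bakelman--Pucci estimate controls $\|u\|_{C^0(B_{3/4})}$ by $\|u\|_{C^0(B_1)} + \|f\|_{C^0(B_1)}$, and the classical interior $C^{1+\beta}$ estimate for uniformly elliptic equations with bounded right-hand side (the elliptic counterpart of Theorem \ref{t:ks2}, obtained from Theorem \ref{t:ks1} applied to incremental quotients) gives a universal $\beta \in (0,1)$ with
\[ \|u\|_{C^{1+\beta}(B_{3/4})} \le C\big(\|u\|_{C^0(B_1)} + \|f\|_{C^\alpha(B_1)}\big), \]
so $\nabla u$ is a genuine Hölder function, which is all we need to launch the iteration.

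The crucial ingredient is the following estimate, special to $d=2$: if $h$ is a viscosity solution of $F(D^2 h) = c$ in a ball $B_r \subset \R^2$ for a constant $c$, then $h \in C^{2+\bar\alpha}(B_{r/2})$ with a scale-invariant bound, for a universal $\bar\alpha \in (0,1)$. To prove it, note first that $h \in C^{1+\beta}(B_{3r/4})$ by the previous step (the right-hand side is constant, hence $C^\alpha$). Now differentiate: since $F(D^2 h(\cdot + te)) = F(D^2 h(\cdot)) = c$, uniform ellipticity of $F$ forces the incremental quotients of $h$, and hence their uniform limit $v = \partial_e h$, to satisfy the two Pucci inequalities $P^+(D^2 v) \ge 0 \ge P^-(D^2 v)$ in the viscosity sense; by the elliptic form of Corollary \ref{c:pucci} this is the same as saying $v$ solves $a_{ij}(x)\partial_{ij} v = 0$ for \emph{some} uniformly elliptic measurable $a_{ij}$ on $B_{3r/4} \subset \R^2$. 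Nirenberg's two-dimensional estimate \cite{nirenberg} then yields $v \in C^{1+\bar\alpha}$ locally with a universal exponent, hence $h \in C^{2+\bar\alpha}$; rescaling $y \mapsto h(ry)/r^2$ gives the scale-invariant form. This is the only place where the dimension being two is used essentially --- the analogous statement is false for general $F$ when $d \ge 3$.

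With $\bar\alpha$ in hand I would fix the target exponent $\alpha \in (0,\bar\alpha)$, assume $f \in C^\alpha(B_1)$, and run the standard Caffarelli-type iteration at each point $x_0 \in B_{1/2}$: at dyadic radii $r_k = \rho^k r_0$ one constructs quadratic polynomials $q_k$ with $\|u - q_k\|_{C^0(B_{r_k}(x_0))} \le M r_k^{2+\alpha}$ and $|D^2 q_{k+1} - D^2 q_k| \le C M r_k^\alpha$, the inductive step being: compare $u$ on $B_{r_k}(x_0)$ with the solution $h$ of $F(D^2 h) = f(x_0)$ having the same boundary data, so that $w = u - h$ obeys $P^-(D^2 w) \le f - f(x_0) \le P^+(D^2 w)$ and hence $\|w\|_{C^0(B_{r_k})} \lesssim [f]_{C^\alpha}\, r_k^{2+\alpha}$ by ABP, while the homogeneous estimate above shows $h$ is within $C M (\rho r_k)^{2+\bar\alpha}$ of its second-order Taylor polynomial $q_{k+1}$ at $x_0$ on $B_{\rho r_k}(x_0)$; choosing $\rho$ small (universal) so that $C M (\rho r_k)^{2+\bar\alpha} \le \tfrac{1}{2} M (\rho r_k)^{2+\alpha}$, which is possible precisely because $\bar\alpha > \alpha$, closes the induction, and the bound on $|D^2 q_{k+1} - D^2 q_k|$ follows by comparing the $C^0$ bounds at scales $r_k$ and $r_{k+1}$. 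Summing the resulting geometric series, $D^2 q_k$ converges, the limit must be $D^2 u(x_0)$, and $|D^2 u(x_0) - D^2 u(y_0)| \le C(\|u\|_{C^0(B_1)} + \|f\|_{C^\alpha(B_1)}) |x_0 - y_0|^\alpha$ on $B_{1/2}$; together with the $C^{1+\beta}$ bound this yields the full $C^{2+\alpha}$ estimate. The step I expect to be the real obstacle is the homogeneous estimate of the second paragraph, i.e. producing a constant-coefficient exponent $\bar\alpha$ \emph{strictly} above $\alpha$: it is the surplus $\bar\alpha - \alpha > 0$ that powers the iteration, and it is only available in two dimensions. Everything else is the routine Caffarelli bookkeeping (matching polynomials across scales, tracking $\nabla q_k(x_0)$ and $q_k(x_0)$).
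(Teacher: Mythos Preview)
Your proposal is correct and matches precisely the approach the paper indicates, but note that the paper does not actually prove this theorem: it is stated in the preliminaries as a review result, with the single sentence ``The $C^{2+\alpha}$ estimate of Theorem \ref{t:nirenberg-caffarelli} is originally due to Nirenberg \cite{nirenberg} when the right hand side is zero. Caffarelli's Schauder estimates for fully nonlinear elliptic equations (see \cite[Section 8.1]{CC}) allow us to add a H\"older continuous right hand side to essentially any elliptic equation that satisfies a $C^{2+\alpha}$ estimate.'' Your argument is exactly a fleshing-out of those two cited ingredients --- Nirenberg's 2D $C^{1+\bar\alpha}$ estimate applied to directional derivatives to get the homogeneous $C^{2+\bar\alpha}$ bound, followed by the standard Caffarelli perturbation iteration to absorb $f\in C^\alpha$ --- so there is nothing further to compare.
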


The $C^{2+\alpha}$ estimate of Theorem \ref{t:nirenberg-caffarelli} is originally due to Nirenberg \cite{nirenberg} when the right hand side is zero. Caffarelli's Schauder estimates for fully nonlinear elliptic equations (see \cite[Section 8.1]{CC}) allow us to add a H\"older continuous right hand side to essentially any elliptic equation that satisfies a $C^{2+\alpha}$ estimate.

Naturally, the smaller cylinder $Q_{1/2}$ can be replaced with $Q_\rho$ in Theorems \ref{t:ks1}, \ref{t:ks2} and \ref{t:nirenberg-caffarelli}, for any value of $\rho \in (0,1)$, by adjusting the constants $C$.

\section{Singular solutions to parabolic equations with measurable coefficients in 2D}

Theorem \ref{t:main} is justified by the following explicit function
\begin{equation} \label{e:u}
 u(t,x) = \frac{|x|^2+t}{(|x|^2-t)^{1-\alpha/2}}.
\end{equation}

Once we know the function $u$ explicitly, the proof of Theorem \ref{t:main} is a relatively short computation to verify that \eqref{e:pucci} holds.

\begin{proof}[Proof of Theorem \ref{t:main}]
We have to verify that the function $u$ given in \eqref{e:u} satisfies \eqref{e:pucci}. It is clear that this function is smooth away from $(0,0)$ and that it has the desired homogeneity.

The function $u$ is radially symmetric with respect to $x$, so the eigenvectors of $D^2 u$ are in the radial direction and its perpendicular. Let us use polar coordinates and call $r = |x|$. The eigenvalues of $D^2 u$ are precisely $u_{rr}$ and $u_r/r$. The following are the values of $u_t$ and $u_r/r$, obtained via a direct computation.

\begin{align*}
u_t &= (r^2-t)^{-2+\alpha/2} \left( \left( 2 - \alpha/2 \right) r^2 - \frac \alpha 2 t \right), \\
\frac {u_r} r &= (r^2-t)^{-2+\alpha/2} \left( \alpha r^2 - (4 - \alpha) t \right).
\end{align*}

We observe that wherever $t<0$, we have both $u_r/r > 0$ and $u_t > 0$. The value of $u_{rr}$ may have either sign and we do not need to compute it explicitly.

The function $u$ is parabolic-homogeneous of degree $\alpha$. That means that $\lambda^\alpha u(t,x) = u(\lambda^2 t, \lambda x)$. Therefore, we also have
\[ u_t(\lambda^2 t, \lambda x) - P^\pm(D^2u(\lambda^2 t, \lambda x)) = \lambda^{\alpha-2} \left( u_t(t,x) - P^\pm(D^2u(t,x)) \right).   \]
Thus, it is enough to verify the hypothesis \eqref{e:pucci} on the surface $\T := \{|x|^2 - t = 1\} \cap \{t\leq 0\}$.

Note that $\mathcal S$ is a compact surface. The functions $u_t$ and $u_r/r$ achieve their maximum and minimum positive values. In order to verify \eqref{e:pucci} on $\T$, we pick $\lambda$ and $\Lambda$ such that
\begin{align*}
\max_\T u_t - \Lambda \min_\T (u_r/r) + \lambda \max_\T u_{rr}^- \leq 0, \\
\min_\T u_t - \lambda \max_\T (u_r/r) - \lambda \max_\T u_{rr}^+ \geq 0.
\end{align*}
First, using that $\min_\T u_t>0$, we pick $\lambda>0$ small enough to ensure that the second inequality holds. Then, using that $\min_\T (u_r/r)>0$, pick $\Lambda$ large enough to ensure that the first inequality holds. These two inequalities ensure that \eqref{e:pucci} holds everywhere on $\T$. Because of the homogeneity of $u$, \eqref{e:pucci} holds everywhere, which concludes the proof.
\end{proof}

\section{Fully nonlinear parabolic equations in 2D.}

In this section, we prove Proposition \ref{p:2d}

From Theorem \ref{t:ks2}, we know that any viscosity solution $u$ to a fully nonlinear parabolic equation, in any dimension, is differentiable in time and satisfies, for any $\rho < 1$.
\[ \|u_t\|_{C^\alpha(Q_\rho)} \lesssim \|u\|_{C^0(Q_1)}. \]

Thus, given any solution to the parabolic equation $u_t - F(D^2u) = 0$, we can freeze time and consider the elliptic equation
\[ F(D^2u) = u_t. \]
Here, we think of $u_t$ as the right hand side of an elliptic equation. For any $t$, the function $u(t,\cdot)$ solves a uniformly elliptic equation with a $C^\alpha$ right hand side. Applying Theorem \ref{t:nirenberg-caffarelli}, we have that $D^2u(t,\cdot)$ exists and is H\"older continuous for every fixed value of $t$. Moreover,
\[ \|D^2u(t,\cdot)\|_{C^\alpha(B_{3/4})} \lesssim \|u(t,\cdot)\|_{C^0(B_{7/8})} + \|u_t(t,\cdot)\|_{C^\alpha(B_{7/8})} \lesssim \|u\|_{C^0(Q_1)}. \]
From here, we have the existence and H\"older continuity in space of $D^2 u$. We are only left to establish its H\"older continuity in time.

It is convenient to state the H\"older continuity with respect to the parabolic distance. In this case, since $\alpha$ is small, and $Q_{1/2}$ has a fixed size, the result would be equivalent to its H\"oder continuity with respect to the Euclidean distance in space-time. The parabolic distance is scale invariant with respect to the parabolic scaling. Because of that, it is the most appropriate distance when working with parabolic equations. It is
\[ d_p((t,x),(s,y)) := |x-y| + \sqrt{|s-t|}.\]
Using that $u_t$ is H\"older continuous with respect to the parabolic distance $d_p$, and that $u(t,\cdot) \in C^{2+\alpha}$ for every fixed value of $t$, we will prove that $D^2u$ is H\"older continuous in space-time with respect to the parabolic distance.

Let $(t,x)$ and $(s,x)$ be two points in $Q_{1/2}$ so that $|t-s| < r^2$. Let us analyze the values of $u$ on $\{t\} \times B_r(x)$ and $\{s\} \times B_r(x)$.

Using the $C^\alpha$ regularity of $u_t$, with respect to the parabolic distance, we get, for any $y \in B_r(x)$
\[ |u(t,y) - u(s,y) - (t-s) u_t(t,y)| \leq C r^{2+\alpha}. \]
Moreover, since $|u_t(t,y)-u_t(t,x)|\leq C r^\alpha$, we also get
\[ |u(t,y) - u(s,y) - (t-s) u_t(t,x)| \leq C r^{2+\alpha}. \]

Let us know compare their second order Taylor expansions in the space variable. Since $u$ is $C^{2+\alpha}$ in space, we have
\[ |u(t,y) - u(t,x) - (y-x) \cdot \nabla u(t,x) - \frac 12 (y_i-x_i)(y_j-x_j) \partial_{ij} u(t,x)| \leq C r^{2+\alpha}.\]
\[ |u(s,y) - u(s,x) - (y-x) \cdot \nabla u(s,x) - \frac 12 (y_i-x_i)(y_j-x_j) \partial_{ij} u(s,x)| \leq C r^{2+\alpha}.\]
Adding the three inequalities above, we deduce an inequality for a second order polynomial that says
\[ \sup_{z \in B_r} |a_{ij} z_i z_j + b \cdot z + c| \leq C r^{2+\alpha},\]
where, $z$ stands for $y-x$, and the coefficients of the polynomial are
\begin{align*}
a_{ij} &= \partial_{ij} u(t,x) - \partial_{ij} u(s,x), \\
b &= \nabla u(t,x)-\nabla u(s,x), \\
c &= u(s,x)-u(t,x) + (t-s) u_t(t,x).
\end{align*}
The sup norm on the space of second order polynomials is equivalent to any other norm since it is a finite dimensional space. In this case, taking the scaling into account, that means
\begin{align*}
a_{ij} &\leq C r^\alpha, \\
b &\leq C r^{1+\alpha}, \\
c &\leq C r^{2+\alpha}.
\end{align*}
Thus, we obtained that $|D^2u(t,x) - D^2u(s,x)| \leq C r^{\alpha}$. This is the H\"older continuity of $D^2u$ in time and we finished the proof of Proposition \ref{p:2d}.

\section{Singular solutions to Fully nonlinear parabolic equations}

\label{s:fully_nonlinear}

In this section, we explain how to verify if a function $u$ solves some equation of the form \eqref{e:fully_nonlinear}. We explain the justification of (almost)-Theorem \ref{f:main2}

We start with the following lemma, characterizing the functions that solve a uniformly parabolic fully nonlinear equation.

\begin{lemma} \label{l:fn_pucci}
Let $\Lambda \geq \lambda > 0$. Given a (space-time) set $\Omega \subset \R \times \R^d$ and a function $u : \Omega \to \R$, which is second differentiable in $x$ and differentiable in $t$, then the following two statements are equivalent.
\begin{enumerate}
	\item There exists a uniformly elliptic function $F$ so that $u_t - F(D^2 u) = 0$ holds in $\Omega$.
	\item For any pair of points $(t,x)$ and $(s,y)$ in $\Omega$, we have
	\begin{equation} \label{e:pucci_condition}
	P^-( D^2 u(t,x) - D^2 u(s,y)) \leq u_t(t,x) - u_t(s,y) \leq P^+( D^2 u(t,x) - D^2 u(s,y)).
	\end{equation}
	Here, $P^+$ and $P^-$ are the extremal Pucci operators defined in \eqref{e:Pmin} and \eqref{e:Pmax} with ellipticity parameters $\lambda$ and $\Lambda$.
\end{enumerate}
\end{lemma}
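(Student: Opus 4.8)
The plan is to prove Lemma \ref{l:fn_pucci} as an essentially pointwise statement about the "graph" of the map $D^2u \mapsto u_t$, in direct analogy with Corollary \ref{c:pucci} but now relating increments rather than single matrices. Write $\mathcal{G} := \{ (D^2u(t,x), u_t(t,x)) : (t,x) \in \Omega \} \subset \mathcal{S}^{d\times d} \times \R$, where $\mathcal{S}^{d\times d}$ denotes the symmetric matrices. Statement (1) says precisely that there is a uniformly elliptic $F : \mathcal{S}^{d\times d} \to \R$ whose graph contains $\mathcal{G}$, i.e. $F(M_1) - F(M_2) \in [P^-(M_1 - M_2), P^+(M_1-M_2)]$ for all $M_1, M_2$, and $F(D^2u(t,x)) = u_t(t,x)$ for all $(t,x)$. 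Statement (2) says that $\mathcal{G}$ itself already satisfies this increment condition on all pairs of its own points. So the lemma is the assertion that a "Pucci-Lipschitz" relation defined on an arbitrary subset of $\mathcal{S}^{d\times d}$ extends to a globally defined uniformly elliptic function.

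The direction (1) $\Rightarrow$ (2) is immediate: if $u_t - F(D^2u) = 0$, then for any two points $u_t(t,x) - u_t(s,y) = F(D^2u(t,x)) - F(D^2u(s,y))$, and the uniform ellipticity of $F$ restated via the Pucci operators (as recalled at the end of Section \ref{s:preliminaries}) gives exactly the two inequalities in \eqref{e:pucci_condition}. For (2) $\Rightarrow$ (1), the key step is an explicit extension formula. First note (2) forces $D^2u(t,x) = D^2u(s,y) \Rightarrow u_t(t,x) = u_t(s,y)$ (since $P^\pm(0) = 0$), so there is a well-defined function $g$ on the projection of $\mathcal{G}$ to $\mathcal{S}^{d\times d}$ with $g(D^2u(t,x)) = u_t(t,x)$, and (2) reads $P^-(M_1 - M_2) \le g(M_1) - g(M_2) \le P^+(M_1 - M_2)$ on that domain. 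Then I would define, for every symmetric $M$,
\[ F(M) := \inf_{M_0 \in \mathrm{dom}(g)} \bigl( g(M_0) + P^+(M - M_0) \bigr), \]
a Pucci-analogue of the standard McShane--Whitney Lipschitz extension. I must check three things: (a) $F$ is finite (bounded below), which follows because for any fixed $M_1 \in \mathrm{dom}(g)$, $g(M_0) + P^+(M-M_0) \ge g(M_1) - P^+(M_0 - M_1) + P^+(M - M_0) \ge g(M_1) + P^-(M - M_1)$ using subadditivity $P^+(A) + P^+(B) \ge P^+(A+B)$ and $P^+(-C) = -P^-(C)$; (b) $F$ agrees with $g$ on $\mathrm{dom}(g)$: taking $M_0 = M$ gives $F(M) \le g(M)$, and the lower bound just computed with $M_1 = M$ gives $F(M) \ge g(M)$; (c) $F$ is uniformly elliptic, i.e. $P^-(M-N) \le F(M) - F(N) \le P^+(M-N)$ for all symmetric $M, N$ — the upper bound because for each $M_0$, $g(M_0) + P^+(M - M_0) \le g(M_0) + P^+(N - M_0) + P^+(M - N)$ by subadditivity, then take infimum; the lower bound by the symmetric argument swapping $M$ and $N$ and using $P^+(M-N) = -P^-(N-M)$. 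Finally, since $F$ is uniformly elliptic with parameters $\lambda, \Lambda$ and extends $g$, we have $u_t(t,x) = g(D^2u(t,x)) = F(D^2u(t,x))$ on $\Omega$, which is (1).

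The main obstacle is purely in getting the extension formula and its three properties right — in particular verifying finiteness and uniform ellipticity requires the subadditivity and odd-type symmetry relations of the Pucci operators ($P^+(A+B) \le P^+(A) + P^+(B)$, $P^-(A) = -P^+(-A)$, and the positive homogeneity), which hold because $P^+(A) = \sup\{\tr(BA) : \lambda I \le B \le \Lambda I\}$ is a support function and hence sublinear. None of this is deep, but one has to be careful that all inequalities point the right way and that the domain of $g$ is nonempty (if $\Omega = \emptyset$ the statement is vacuous, and otherwise any constant extension works, but the McShane formula handles it uniformly). A minor additional remark worth including: the resulting $F$ need not be unique, and if one wants $F$ continuous or even Lipschitz that is automatic here since uniform ellipticity already forces $F$ to be globally Lipschitz in $M$ with constant $\Lambda \sqrt{d}$ (or in the appropriate matrix norm). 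I would present the proof in this order: (1) $\Rightarrow$ (2) in two lines; then the well-definedness of $g$; then state the McShane-type formula; then verify (a), (b), (c); then conclude.
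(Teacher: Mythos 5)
Your proof is correct and follows essentially the same strategy as the paper's: the forward direction is immediate from ellipticity of $F$, and the reverse direction constructs $F$ by an explicit McShane--Whitney-type extension formula (you use the infimum form with $P^+$, the paper uses the dual supremum form with $P^-$; these are interchangeable). One small slip in your finiteness step (a): condition (2) gives $g(M_0) \ge g(M_1) + P^-(M_0 - M_1) = g(M_1) - P^+(M_1 - M_0)$, so the term you subtract should be $P^+(M_1 - M_0)$, not $P^+(M_0 - M_1)$; with that fix the subadditivity of $P^+$ yields $-P^+(M_1-M_0) + P^+(M-M_0) \ge -P^+(M_1-M) = P^-(M-M_1)$ and the chain closes as you intend.
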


\begin{proof}
We start with the easy implication $(1) \Rightarrow (2)$. For any pair of points $(t,x)$, $(s,y)$ we know that the equation in (1) holds at both points. Using the ellipticity of $F$, we get that $P^-( D^2 u(t,x) - D^2 u(s,y)) \leq F(D^2 u(t,x)) - F(D^2 u(s,y)) \leq P^+( D^2 u(t,x) - D^2 u(s,y))$, from which 2. follows.

In order to prove $2. \Rightarrow 1.$, we need to construct a nonlinear function $F$ for any given function $u$ satisfying the inequalities in (2). Such function may not be unique. One possibility is given by the formula
\begin{equation} \label{e:F-reconstructed}
F(M) = \sup_{(t,x) \in \Omega} u_t(t,x) + P^-(M - D^2u(t,x)).
\end{equation}
Let us first argue that our definition makes sense. For that, we have to verify that the right hand side is finite. Let us fix any point $(t_0,x_0) \in \Omega$. Using the uniform ellipticity of $P^-$, we observe that for any other $(t,x) \in \Omega$ we have
\[ P^-(A - D^2u(t,x)) \leq P^+(A - D^2u(t_0,x_0)) + P^-(D^2u(t_0,x_0) - D^2u(t,x)).\]
Therefore, using 2. with $(s,y) = (t_0,x_0)$,
\[ u_t(t,x) + P^-(A - D^2u(t,x)) \leq u(t_0,x_0) + P^+(A - D^2u(t_0,x_0)). \]
This gives us a uniform bound for the right hand side of \eqref{e:F-reconstructed}. Thus, $F(M)$ is well defined and finite for every symmetric matrix $M$. We are left to verify that $F$ is uniformly elliptic.

Let $A$ and $B$ be any two symmetric matrices in $\R^{n \times n}$. Let us compare the values of $F(M+B)$ and $F(M)$. Using that the Pucci operator $P^-$ itself is uniformly elliptic, for any $(t,x) \in \Omega $we get
\[  u_t(t,x) + P^-(A - D^2u(t,x)) + P^-(A) \leq u_t(t,x) + P^-(A+B - D^2u(t,x)) u_t(t,x) + P^-(A - D^2u(t,x))  + P^+(B).\]
Therefore $F(A)+P^-(B) \leq F(A+B) \leq F(A)+P^+(B)$, thus $F$ is uniformly elliptic.
\end{proof}

The following is a similar characterization as in Lemma \ref{l:fn_pucci}. It is slightly easier to implement in a numerical computation.
\begin{cor} \label{c:fn_ratio}
Given a (space-time) set $\Omega \subset \R \times \R^d$ and a function $u : \Omega \to \R$, which is second differentiable in $x$ and differentiable in $t$, then the following two statements are equivalent.
\begin{enumerate}
	\item For some $\Lambda \geq \lambda > 0$, there exists a uniformly elliptic function $F$ so that $u_t - F(D^2 u) = 0$ holds in $\Omega$.
	\item There exists some constant $C \geq 1$, so that for any pair of points $(t,x)$ and $(s,y)$ in $\Omega$, we have
	\begin{equation} \label{e:ratio_condition}
	C^{-1} \leq \frac{(\partial_t u(t,x)-\partial_t u(s,y))_- + \mathrm{tr}(D^2u(t,x)-D^2u(s,y))_+}{(\partial_t u(t,x)-\partial_t u(s,y))_+ + \mathrm{tr}(D^2u(t,x)-D^2u(s,y))_-} \leq C.
	\end{equation}
	Here, we write $a_+$ and $a_-$ to denote the positive and negative part of a number or a symmetric matrix. We use the convention $a_- = -a$ if $a \leq 0$.
\end{enumerate}
\end{cor}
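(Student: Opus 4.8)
The plan is to deduce Corollary \ref{c:fn_ratio} from Lemma \ref{l:fn_pucci} by showing that the ratio condition \eqref{e:ratio_condition}, quantified over \emph{some} constant $C \geq 1$, is equivalent to the Pucci condition \eqref{e:pucci_condition}, quantified over \emph{some} pair $0 < \lambda \leq \Lambda$; since Lemma \ref{l:fn_pucci} already identifies the latter with the existence of a uniformly elliptic $F$, nothing else is required. For a pair of points I will abbreviate $\delta := \partial_t u(t,x) - \partial_t u(s,y) \in \R$ and $M := D^2u(t,x) - D^2u(s,y)$, so that \eqref{e:pucci_condition} reads $\lambda \tr M_+ - \Lambda \tr M_- \leq \delta \leq \Lambda \tr M_+ - \lambda \tr M_-$, and I will read \eqref{e:ratio_condition} as the symmetric pair of inequalities $\delta_- + \tr M_+ \leq C(\delta_+ + \tr M_-)$ and $\delta_+ + \tr M_- \leq C(\delta_- + \tr M_+)$; this reading is the natural one and it makes the degenerate case $(\delta,M) = (0,0)$ transparent.

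The first thing I would record is that swapping the two points of the pair replaces $(\delta,M)$ by $(-\delta,-M)$, and that \emph{both} conditions are invariant under this move: $P^\pm(-M) = -P^\mp(M)$ for the Pucci condition, and the two displayed inequalities merely trade places for the ratio condition. Consequently, in each direction it is enough to treat the sign case $\delta \geq 0$ (verifying, when needed, both of the two inequalities in that case), the case $\delta < 0$ following by applying the conclusion to the swapped pair. For the implication \eqref{e:pucci_condition} $\Rightarrow$ \eqref{e:ratio_condition} I would first shrink $\lambda$ and enlarge $\Lambda$ so that $\lambda \leq 1 \leq \Lambda$ --- which only weakens \eqref{e:pucci_condition} --- and then check that $C := \Lambda/\lambda$ works: when $\delta \geq 0$, the lower bound gives $\lambda \tr M_+ \leq \delta + \Lambda \tr M_- \leq \Lambda(\delta + \tr M_-)$, which is the first inequality, and the upper bound gives $\lambda(\delta + \tr M_-) \leq \delta + \lambda \tr M_- \leq \Lambda \tr M_+$, which is the second.

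For the converse \eqref{e:ratio_condition} $\Rightarrow$ \eqref{e:pucci_condition} I would, given $C$, take $\lambda := 1/(2C)$ and $\Lambda := 2C$ (so $\lambda \leq \Lambda$ since $C \geq 1$) and verify the upper Pucci bound $\delta \leq \Lambda \tr M_+ - \lambda \tr M_-$ for every pair. When $\delta \geq 0$, the inequality $\delta_+ + \tr M_- \leq C(\delta_- + \tr M_+) = C\tr M_+$ yields both $\delta \leq C\tr M_+$ and $\tr M_- \leq C\tr M_+$, hence $\Lambda \tr M_+ - \lambda \tr M_- \geq 2C\tr M_+ - \tfrac12\tr M_+ \geq C\tr M_+ \geq \delta$. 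When $\delta < 0$, the same inequality reads $\tr M_- \leq C(\delta_- + \tr M_+)$, so $\lambda \tr M_- \leq \tfrac12(\delta_- + \tr M_+) \leq \delta_- + \Lambda \tr M_+$, which rearranges to $\delta = -\delta_- \leq \Lambda \tr M_+ - \lambda \tr M_-$; the lower Pucci bound for all pairs then follows by applying the upper bound to the swapped pair.

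I do not expect a genuine obstacle here: the argument is an elementary bookkeeping of positive and negative parts, with the swap invariance doing most of the organizing. The one point that wants a little care --- and the reason I would state the reading of \eqref{e:ratio_condition} explicitly --- is the meaning of the quotient when its numerator or denominator vanishes; writing it as the symmetric pair of product inequalities both disposes of that ambiguity and is the form that is actually convenient for the numerical verification behind (almost)-Theorem \ref{f:main2}.
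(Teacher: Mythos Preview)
Your proposal is correct and follows essentially the same approach as the paper: both argue that condition (2) here is equivalent to condition (2) in Lemma \ref{l:fn_pucci}, and then invoke that lemma. The paper is terser and uses the slightly sharper constants $C = \max(\Lambda,\lambda^{-1},\Lambda/\lambda)$ and, for the converse, $\Lambda = C$, $\lambda = C^{-1}$ (which do suffice, as one checks by the same sign-splitting you carry out), whereas you normalize $\lambda \leq 1 \leq \Lambda$ first and then lose a harmless factor of two in the converse; but the strategy and the bookkeeping are the same.
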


\begin{proof}
We prove that the second condition in Corollary \ref{c:fn_ratio} is equivalent to the second condition in Lemma \ref{l:fn_pucci}. Indeed, if (2) in Lemma \ref{l:fn_pucci} holds, we observe that (2) in Corollary \ref{c:fn_ratio} holds as well with $C = \max(\Lambda,\lambda^{-1},\Lambda / \lambda)$. Conversely, if (2) in Corollary \ref{c:fn_ratio} holds, then (2) in Lemma \ref{l:fn_pucci} holds as well with $\Lambda = C$ and $\lambda = C^{-1}$.
\end{proof}

The main strategy for finding singular solutions to some fully nonlinear parabolic equation is to write a candidate function and verify condition (2) in Corollary \ref{c:fn_ratio}. A similar strategy is used in \cite{NVT} to verify that their function solves a fully nonlinear elliptic equation. In that case, the homogeneity and some symmetries of the functions are used to simplify the computation and a fully analytical proof is given. We do not give a full analytical proof of Theorem \ref{f:main2} in this paper. Instead, we verify numerically that the condition from Corollary \ref{c:fn_ratio} holds for certain candidate function. Below, we describe our implementation of this verification.

A straight forward brute-force approach to verify the condition in Corollary \ref{c:fn_ratio} would be to sample a large number of random pairs of points and verify that they satisfy \eqref{e:ratio_condition}. If either the numerator or the denominator in \eqref{e:ratio_condition} vanishes in a large proportion of $Q_1$, we would identify some of these points quickly and rule out our candidate function. However, this naive algorithm is prone to false positives due to the \emph{curse of dimensionality}. There exist functions for which the numerator and denominator in \eqref{e:ratio_condition} only vanish on surfaces with a relatively high codimension. It is very difficult to randomly find a pair of points sufficiently near such a surface. For example, let us consider the function
\begin{equation} \label{e:candidate1}
u(t,x) = \frac{P_5(x)}{\sqrt{|x|^2-t}}.
\end{equation}
Here, $P_5$ is the cubic polynomial used in \cite{NVT}. If we sample a million pairs of random points in $Q_1$, in all likelihood, all of them would verify condition \eqref{e:ratio_condition} for a constant $C$ being approximately 15. However, this function is not a solution to a fully nonlinear parabolic equation since its time derivative is unbounded near zero. The condition \eqref{e:ratio_condition} is only invalidated when $|t-s|$ is small, and $|x-y|$ is much smaller. Since $x$ and $y$ are five dimensional vectors, it is very unlikely that a random selection of points will ever sample a pair where $x$ and $y$ are practically identical.

A possible (but arguably unnatural) workaround would be to start by testing that $u_t$, and all the partial derivatives $\partial_{x_i} u$, satisfy an equation with measurable coefficients like \eqref{e:measurable_coefficients}. This can be tested by checking that large sample of points verifies \eqref{e:pucci} for every one of those derivatives. The function above would not pass the test for $u_t$. However, this algorithm will fail to rule out other functions. For example, let
\begin{equation} \label{e:candidate2}
u(t,x) = \frac{P_5(x)}{-t + \sqrt{|x|^2+t^2}}.
\end{equation}
For this function, we have that $u_t$ satisfies \eqref{e:pucci}. The partial derivatives $\partial_{x_i} u$ do not, but they only fail on the line $t=0$ and $x = a e_i$, for $a \in \R$. This is a segment that has codimention 5 in $Q_1$. Again, it is very unlikely for a random sample of points to ever hit near a set of codimension five.

The algorithm that easily rules out both examples above is to follow a stochastic gradient flow for the ratio \eqref{e:ratio_condition} and verify that it stays bounded. Note that the gradient flow will tend to some local maximum for the ratio in \eqref{e:ratio_condition}, and there may be many of them. If $u$ does not satisfy \eqref{e:ratio_condition}, the gradient flow may or may not diverge depending on the choice of the initial point. Thus, we still have to sample several possible random initial points and start our stochastic gradient flow from each one of them.

In our test, we see the ratio diverge very quickly for the function \eqref{e:candidate1}. For the example \eqref{e:candidate2}, the gradient flow seems to diverge for approximately 2\% of the initial points.

According to our test, the following function satisfies \eqref{e:ratio_condition} for $C=14$ (the optimal value we get is approximately $C=13.7$).
\begin{equation} \label{e:candidate3}
u(t,x) = \frac{P_5(x)}{-t + \sqrt{|x|^2+t^2}} + \frac 1 {12} P_5(x).
\end{equation} 
We ran a large test by performing a stochastic gradient flow (with two million iterations) starting from a collection of several thousand random initial pairs of points in $Q_1$. The computation was carried out in the University of Chicago Research Computing Center. This is the function that verifies (almost)-Theorem \ref{f:main2}.

Note that the quotient in \eqref{e:ratio_condition} is discontinuous on $(s,y)=(t,x)$. The limits in each direction correspond to directional derivatives of $u$. In fact, according to our computations, the maximum value of the quotient is achieved near the diagonal $(s,y)=(t,x)$.

Even though this numerical computation cannot be considered a rigorous proof, it seems very convincing to us. Our algorithm could only fail in the unlikely scenario that the gradient flow for the quotient in \eqref{e:ratio_condition} for this function $u$, diverges only for some tiny proportion of the initial points.

For those who may want to perform numerical experiments themselves, we posted our source code at \url{https://math.uchicago.edu/~luis/singular_parabolic/sp.html}

The formula \eqref{e:candidate3} was clearly constructed by understanding where \eqref{e:candidate1} and \eqref{e:candidate2} fail to verify \eqref{e:ratio_condition} and modifying the functions accordingly. At this point, it is arguably not worth recounting all the other candidates that we tested and the reasons why they failed. It is interesting to point out that understanding why \eqref{e:candidate2} fails to solve a fully nonlinear parabolic equations is what motivated the proof of Theorem \ref{t:impossibility} below.

\begin{remark}
There is a curious fact that we observed applying our algorithm to verify some of the examples that we already knew for singular solutions to uniformly elliptic fully nonlinear elliptic equations. The optimal constant $C$ in \eqref{e:ratio_condition} appears to be an integer in those cases.

For elliptic equation, the condition \eqref{e:ratio_condition} applied to a function $u : \Omega \to \R$ that depends on $x$
only says that there exists a constant $C \geq 1$ such that for all pairs $x,y \in \Omega$,
\[ C^{-1} \leq \frac{\tr (D^2u(x)-D^2u(y))_+}{\tr (D^2u(x)-D^2u(y))_-} \leq C.\]
For each such solution $u$, there is a smallest value of the constant $C$ that makes the condition hold. This would be the \emph{optimal} value of $C$. If we let $u$ be the homogeneous of degree two function in five dimensions constructed in \cite{NVT}, our computation (with the algorithm described above) tells us that the optimal value of $C$ that makes the condition hold is $C=9$. The fact that it is exactly an integer number suggests that there should be a clean way to compute it. From the proofs in \cite{NVT}, one can deduce an upper bound for the optimal $C$, but we do not know any way to compute its exact value.

We learned the following (unpublished) example in nine dimensions from Charles Smart. If we let
\[ u(x_1,\dots,x_9) = \frac{1}{|x|} \det \begin{pmatrix} x_1 & x_2 & x_3 \\ x_4 & x_5 & x_6 \\ x_7 & x_8 & x_9 \end{pmatrix},\]
then it also solves some fully nonlinear elliptic PDE. In this case, the smallest value of the constant $C$ is thirteen (13), also an integer number.
\end{remark}

\section{Impossibility of singular homogeneous solutions for fully nonlinear parabolic equations}

\label{s:impossibility}

In this last section, we prove Theorem \ref{t:impossibility} and the related complementary results of Propositions \ref{p:impossibility_homogeneous} and \ref{p:impossibility_C11}.

The strategy of the proof is to use the homogeneity of the equation and the $C^\alpha$ regularity of $u_t$ to deduce that $u_t$ is constant at the final time $t=0$. Then, since $u_t$ satisfies a parabolic equation of the form \eqref{e:measurable_coefficients}, we use a unique continuation argument to prove that $u_t$ is constant everywhere. This means that any singularity at the final time $t=0$ would be propagated backwards to any earlier time.

\begin{lemma} \label{l:utconstant}
Let $u$ be a solution to a fully nonlinear parabolic equation
\[ u_t - F(D^2u) = 0 \text{ in } Q_1.\]
Assume that $F$ is uniformly elliptic, $u(0,x)$ is homogeneous of degree two in $x$ and $C^2$ away from the origin. Then $u_t(0,x)$ is constant for all $x \in B_1$.
\end{lemma}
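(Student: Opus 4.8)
The plan is to exploit the degree-two homogeneity of $u(0,\cdot)$ together with the scaling of the equation to show that $u_t(0,\cdot)$ is both homogeneous of degree zero and continuous, hence radially constant along rays; then a second argument rules out the remaining angular dependence. First I would examine the parabolic rescalings $u_\lambda(t,x) := \lambda^{-2} u(\lambda^2 t, \lambda x)$. Each $u_\lambda$ solves the same equation $\partial_t u_\lambda - F(D^2 u_\lambda) = 0$ in a rescaled cylinder, since $F$ is translation-invariant and the rescaling preserves the parabolic structure. On the slice $t=0$, the degree-two homogeneity gives $u_\lambda(0,x) = \lambda^{-2} u(0,\lambda x) = u(0,x)$, so all the $u_\lambda$ agree with $u$ at the final time. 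The key point is then that $\partial_t u_\lambda(0,x) = \partial_t u(0, \lambda x)$ by the chain rule, while backward uniqueness for the parabolic equation (applied to $u$ and $u_\lambda$, which share the same terminal data on $B_{1}$ for $\lambda \le 1$, say, after also rescaling the spatial domain appropriately) forces $u_\lambda \equiv u$ on the common cylinder. Comparing time derivatives on $t=0$ yields $u_t(0,\lambda x) = u_t(0,x)$ for all admissible $\lambda$, i.e. $u_t(0,\cdot)$ is homogeneous of degree zero.

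Next I would use Theorem \ref{t:ks2}: $u_t$ is $C^\alpha$ up to $t=0$, hence $u_t(0,\cdot)$ is a continuous function on $B_1$ that is homogeneous of degree zero. A continuous degree-zero homogeneous function is determined by its values on the unit sphere and is automatically constant along each ray, but it could still depend on the angular variable. To eliminate the angular dependence, I would note that $u_t(0,\cdot)$ being homogeneous of degree zero and continuous at the origin forces it to be \emph{constant} near $0$: its value at $0$ equals $\lim_{x\to 0} u_t(0,x)$, and by degree-zero homogeneity that limit along any ray is the value on that ray, so all rays share the same value, namely $u_t(0,0)$. Thus $u_t(0,x) \equiv u_t(0,0)$ on all of $B_1$.

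The main obstacle is making the backward-uniqueness / self-similarity comparison rigorous on a bounded cylinder. Backward uniqueness for parabolic equations is a global statement and is delicate in bounded domains (indeed the paper flags exactly this issue for Proposition \ref{p:impossibility_C11}). So rather than comparing $u$ with its rescalings via PDE uniqueness, the cleaner route—and probably the one the paper intends—is to argue directly: the equation gives $u_t(0,x) = F(D^2 u(0,x))$ wherever $u$ is $C^2$, i.e. away from the origin; since $u(0,\cdot)$ is homogeneous of degree two, $D^2 u(0,\cdot)$ is homogeneous of degree zero, so $F(D^2 u(0,\cdot))$ is too, giving the degree-zero homogeneity of $u_t(0,\cdot)$ on $B_1 \setminus \{0\}$ \emph{without any uniqueness argument}. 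Then the $C^\alpha$ regularity of $u_t$ from Theorem \ref{t:ks2} extends this continuously across the origin, and the elementary observation above (continuous $+$ degree-zero homogeneous $\Rightarrow$ constant) finishes the proof. I would write it this way, using the uniqueness machinery only later when propagating the conclusion $u_t \equiv \text{const}$ backward in time in the proof of Theorem \ref{t:impossibility} itself.
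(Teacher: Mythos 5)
Your final argument (the ``cleaner route'' at the end) coincides with the paper's own proof: the equation evaluated pointwise at $t=0$ away from the origin gives $u_t(0,x)=F(D^2u(0,x))$, the degree-two homogeneity of $u(0,\cdot)$ makes $D^2u(0,\cdot)$ homogeneous of degree zero, and the $C^\alpha$ estimate on $u_t$ from Theorem~\ref{t:ks2} then forces the degree-zero homogeneous function $u_t(0,\cdot)$ to be constant. You correctly recognize that the initial rescaling-plus-backward-uniqueness detour is unnecessary and problematic on a bounded cylinder, and you abandon it in favor of exactly the direct argument the paper uses.
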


\begin{proof}
Since $u(0,x)$ is homogeneous of degree two, then $D^2u(0,x)$ is homogeneous of degree zero. In other words $D^2u(0,ax) = D^2 u(0,x)$ for any $a>0$.

From the equation, we have that $u_t(0,x) = F(D^2u(0,x))$, therefore $u_t(0,x)$ is also constant along every ray emanating from the origin. 

From Theorem \ref{t:ks2}, we know that $u_t$ is H\"older continuous. However, the only homogeneous functions of degree zero that are continuous are the constant ones. So, $u_t(0,x)$ must be constant.
\end{proof}

\begin{lemma} \label{l:utsignchanging}
Let $u$ be a solution to a fully nonlinear parabolic equation
\[ u_t - F(D^2u) = 0 \text{ in } Q_1.\]
Assume that $u_t(0,x)=0$ for all $x \in B_1$. Then, for every $x_0 \in B_1$ and every $r>0$, the function $u_t$ is identically zero in $(-r,0] \times B_r(x_0)$ or it takes both positive and negative values there.
\end{lemma}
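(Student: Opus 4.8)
The plan is to prove this by a maximum principle / strong maximum principle argument applied to $u_t$, which satisfies a linear uniformly parabolic equation in nondivergence form with measurable coefficients. First I would observe that $w := u_t$ solves such an equation: differentiating $u_t = F(D^2 u)$ in $t$ gives $w_t = F_{ij}(D^2 u)\, \partial_{ij} w$ (at least formally, and this can be made rigorous via incremental quotients as in the proof of Theorem~\ref{t:ks2}), where the coefficients $a_{ij}(t,x) := F_{ij}(D^2 u(t,x))$ are uniformly elliptic with the same constants $\lambda,\Lambda$ since $F$ is uniformly elliptic. Even without differentiability of $F$, one gets that $w$ satisfies the two Pucci inequalities \eqref{e:pucci} because incremental quotients in time of $u$ satisfy them, and $w$ is their locally uniform limit (using the $C^\alpha$ bound of Theorem~\ref{t:ks2}). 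So $w$ is a viscosity solution to an equation of the form \eqref{e:measurable_coefficients}, with the final-time condition $w(0,\cdot) \equiv 0$ on $B_1$.

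The key point is then the \emph{interior strong maximum principle backward in time}: if $w$ is a solution of a uniformly parabolic equation in nondivergence form on a cylinder $(-r,0]\times B_r(x_0)$, and $w$ attains a nonnegative maximum, or a nonpositive minimum, at the top face $\{t=0\}$ where $w\equiv 0$, then either $w \equiv 0$ on the whole cylinder, or $w$ cannot be one-signed. More carefully: suppose for contradiction that on some $(-r,0]\times B_r(x_0)$ the function $w$ is, say, $\geq 0$ everywhere but not identically zero. Then $w \geq 0$ solves $w_t - P^+(D^2 w) \le 0$ in the viscosity sense (from \eqref{e:pucci}), i.e. $w$ is a nonnegative supersolution — wait, I should be careful about signs: a nonnegative function with $w_t - P^-(D^2w)\ge 0$ is a supersolution of the Pucci extremal equation, and the Krylov--Safonov weak Harnack inequality applies. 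Since $w(0,x) = 0$ for $x$ near $x_0$, the weak Harnack inequality (applied on a slightly smaller cylinder sitting below $t=0$, using that the infimum over a lower sub-cylinder controls a spatial $L^p$ average over an upper slice, hence controls values up to near $t=0$) forces $w \equiv 0$ on an interior sub-cylinder; then one propagates this to the full cylinder by a standard connectedness/continuation argument, covering $(-r,0]\times B_r(x_0)$ by overlapping small cylinders. The symmetric argument handles the case $w \le 0$. Hence the only two possibilities are $w \equiv 0$ on $(-r,0]\times B_r(x_0)$ or $w$ sign-changing there, which is exactly the claim.

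Concretely, the cleanest route is: assume $w$ does \emph{not} take both signs on $(-r,0]\times B_r(x_0)$; WLOG $w \ge 0$ there. Pick any $(t_1, x_1) \in (-r,0)\times B_r(x_0)$. Since $w(0,x)=0$ for $x$ in a neighborhood of $x_1$ contained in $B_r(x_0)$, the weak Harnack inequality for nonnegative supersolutions of $w_t - P^-(D^2w)\ge 0$ — note $w\ge 0$ solves $w_t - P^-(D^2w) \ge 0$ from the second inequality in \eqref{e:pucci} — gives
\[ \left( \fint_{Q^-} w^{p} \right)^{1/p} \le C \inf_{Q^+} w \]
for suitable nested cylinders with $Q^-$ below and $Q^+$ straddling up toward $t=0$; letting $Q^+$ shrink toward a point on $\{t=0\}$ where $w=0$ makes the right side $0$, forcing $w\equiv 0$ on $Q^-$. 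Varying $(t_1,x_1)$ and using that such sub-cylinders cover $(-r,0]\times B_r(x_0)$ yields $w\equiv 0$ on the whole cylinder.

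The main obstacle I anticipate is handling the interface with the final time $t=0$ carefully: the weak Harnack inequality is naturally an interior estimate, and one needs the value $w(0,\cdot)=0$ to be felt. This is resolved by noting that $w$ is continuous up to $t=0$ (by Theorem~\ref{t:ks2}, $u_t \in C^\alpha$ including at the top of $Q_1$ after a trivial translation, since the cylinders $(-r,0]\times B_r(x_0)$ with $x_0 \in B_1$ and $r$ small sit strictly inside $Q_1$ only after shrinking — but the statement as given allows $x_0\in B_1$ and small $r$, so we may assume $(-r,0]\times B_r(x_0)\subset Q_1$, or interpret the lemma on its natural domain). Given continuity up to $\{t=0\}$, a one-signed nonzero $w$ with zero boundary values on the top face contradicts the weak Harnack / strong minimum principle. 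A secondary technical point is ensuring the incremental-quotient argument genuinely produces the Pucci inequalities \eqref{e:pucci} for $w=u_t$ merely from $u_t \in C^\alpha$ and the uniform ellipticity of $F$; this is standard (it is exactly how Theorem~\ref{t:ks2} is proven) and I would simply cite it.
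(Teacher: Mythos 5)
Your argument is correct and is essentially the paper's: the paper observes that $u_t$ satisfies a uniformly parabolic equation with measurable coefficients and then invokes the strong maximum principle (noting that it follows from the Harnack inequality, with references to \cite{MR3185332} and \cite{evans}), which is exactly the weak-Harnack / final-time strong maximum principle argument you spell out. You simply fill in more of the Harnack-to-strong-maximum-principle details and the covering step that the paper leaves to the cited references.
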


\begin{proof}
Assume that $u_t$ does not change sign in a neighborhood of $(0,x_0)$. That is, $u_t : (-r,0] \times B_r(x_0) \to \R$ is either non negative or non positive everywhere.

We know that $u_t$ satisfies a uniformly parabolic equation with measurable coefficients. Since $u(0,x_0)=0$, the strong maximum principle implies that $u$ must be identically zero.

Note that the strong maximum principle for parabolic equations is a direct consequence of the Harnack inequality. See \cite[Theorem Theorem 2.4.32]{MR3185332} and \cite[Section 7.1.4]{evans}.
\end{proof}

\begin{proof}[Proof of Theorem \ref{t:impossibility}]
From Lemma \ref{l:utconstant}, we know that $u_t(0,x)$ is constant for $x \in B_1$. Without loss of generality, we can assume that $u_t(0,x)=0$. Otherwise, we would consider the function $\tilde u(t,x) = u(t,x) - ct$. Clearly, $u$ satisfies \eqref{e:pucci_condition} if and only if $\tilde u$ does.

From Lemma \ref{l:utsignchanging}, we know that for every $x_0 \in B_1$ and for every $r>0$, the function $u_t$ is either constant or sign changing in $(-r,0] \times B_r(x_0)$. Using that $u$ is analytic, we will show that $u_t$ must be constant zero everywhere.

Let us pick any $x_0 \in B_1 \setminus \{0\}$. We claim that $\partial_t^k u(0,x)=0$ for all $x$ in a neighborhood of $x_0$. Let us assume for the sake of a contradiction that was not the case. Let $k$ be the smallest positive integer so that $\partial_t^k u(0,x)$ is not identically zero for $x$ is some neighborhood of $x_0$. We know that $u_t(0,x) \equiv 0$, so $k \geq 2$. Thus, there exists $r>0$ so that $\partial_t^j u(0,x)=0$ for all $x \in B_r(x_0)$ and $j = 1,\dots,k-1$. However, there is also an $x_1 \in B_1(x_0)$ so that $\partial_t^k u(0,x_1) \neq 0$. From continuity, we know that $\partial_t^k u$ does not change sign in some neighborhood of $(0,x_1)$. Therefore, for any $(t,x)$ in that neighborhood of $(0,x_1)$, the function $\partial_t u(t,x)$ will have the same sign as $(-1)^{k-1} \partial_t^k u(0,x_1)$ whenever $t<0$. This contradicts Lemma \ref{l:utsignchanging}. The contradiction comes from the existence of $k$. Therefore $\partial_t^k u(0,x)$ is identically zero for all $x \in B_1 \setminus \{0\}$ and all $k = 1,2,3,\dots$. From the analyticity of $u$ away from $(0,0)$ (with respect to time) we conclude that $u_t$ is identically zero in $Q_1$.

This means that $u(t,x)$ is constant with respect to $t$. So, it cannot fail to be $C^2$ at $(0,0)$ without having a singularity at $(t,0)$ for all $t < 0$.
\end{proof}

The proof of Proposition \ref{p:impossibility_homogeneous} follows similarly, replacing Lemma \ref{l:utconstant} by the following lemma

\begin{lemma} \label{l:utconstant_homogeneous}
Let $u$ be a solution to a fully nonlinear parabolic equation
\[ u_t - F(D^2u) = 0 \text{ in } Q_1.\]
Assume that $F$ is uniformly elliptic and homogeneouns of degree one, $u(0,x)$ is homogeneous of any degree smaller than two in $x$ and $C^2$ away from the origin. Then $u_t(0,x)=0$ for all $x \in B_1$.
\end{lemma}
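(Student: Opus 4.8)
\textbf{Proof proposal for Lemma \ref{l:utconstant_homogeneous}.}

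The plan is to mimic the structure of the proof of Lemma \ref{l:utconstant}, but now the homogeneity of $u(0,x)$ is of degree $\beta < 2$, so $D^2 u(0,x)$ is homogeneous of degree $\beta - 2 < 0$ rather than of degree zero. Let me write $\beta$ for the degree of homogeneity of $u(0,\cdot)$. First I would record the scaling: for every $a > 0$ and $x \in B_1$ with $ax \in B_1$,
\[
D^2 u(0, ax) = a^{\beta - 2} D^2 u(0, x).
\]
Then I would use the equation at time $t = 0$, namely $u_t(0,x) = F(D^2 u(0,x))$, together with the assumption that $F$ is homogeneous of degree one. Homogeneity of $F$ gives $F(a^{\beta-2} M) = a^{\beta-2} F(M)$, so
\[
u_t(0, ax) = F\big(D^2 u(0,ax)\big) = F\big(a^{\beta-2} D^2 u(0,x)\big) = a^{\beta-2} F\big(D^2u(0,x)\big) = a^{\beta - 2} u_t(0,x).
\]
Thus $u_t(0,\cdot)$ is homogeneous of degree $\beta - 2 < 0$ on $B_1$.

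The second step is to invoke the regularity from Theorem \ref{t:ks2}: $u_t$ is H\"older continuous, in particular bounded, on $Q_{1/2}$, hence $u_t(0,\cdot)$ is bounded near the origin. But a function on $B_1 \setminus \{0\}$ that is homogeneous of strictly negative degree and bounded near $0$ must vanish identically: evaluating the homogeneity relation at $x$ fixed and letting $a \to 0^+$ forces $u_t(0,x) = \lim_{a\to 0^+} a^{2-\beta} u_t(0,ax) = 0$ since $2 - \beta > 0$ and $u_t(0,ax)$ stays bounded. This gives $u_t(0,x) = 0$ for all $x \in B_1 \setminus \{0\}$, and then for all $x \in B_1$ by continuity of $u_t$. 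Note that, unlike in Lemma \ref{l:utconstant}, one need not appeal to continuity-at-the-origin of a degree-zero homogeneous function — the vanishing is automatic from boundedness plus the negative degree.

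Once this lemma is in hand, Proposition \ref{p:impossibility_homogeneous} follows exactly as Theorem \ref{t:impossibility} did: with $u_t(0,\cdot) \equiv 0$ one applies Lemma \ref{l:utsignchanging} and the analyticity of $u$ away from the origin to propagate the vanishing of all time derivatives $\partial_t^k u(0,x)$, concluding that $u_t \equiv 0$ in $Q_1$ and hence $u$ is $t$-independent, so it cannot have an isolated singularity only at $(0,0)$. The only mild subtlety — and the main thing to be careful about — is the case $\beta < 0$ or $\beta$ not an integer: the statement only requires $u(0,\cdot)$ to be homogeneous of degree less than two and $C^2$ away from the origin, so $D^2 u(0,\cdot)$ could itself be unbounded at the origin; but this does not affect the argument, since we only use the equation at points $x \neq 0$ where everything is finite, and the contradiction with boundedness of $u_t$ is what closes the proof. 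I do not expect any genuine obstacle here; the one place to state carefully is exactly how homogeneity of $F$ of degree one interacts with the degree-$(\beta-2)$ scaling of $D^2u(0,\cdot)$, which is the computation displayed above.
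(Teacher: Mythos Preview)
Your proof is correct and follows essentially the same route as the paper's: the scaling computation $u_t(0,ax)=a^{\beta-2}u_t(0,x)$ via homogeneity of $F$, followed by the observation that a negative-degree homogeneous function cannot be continuous (equivalently, bounded) at the origin unless it vanishes. Your limit argument with $a\to 0^+$ is just a slightly more explicit version of the paper's appeal to continuity at the origin.
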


\begin{proof}
Let us say that $u(0,x)$ is homogeneous of degree $\alpha$, for some $\alpha < 2$. Then $D^2u(0,x)$ is homogeneous of degree $\alpha-2$. In other words $D^2u(0,ax) = a^{\alpha-2} D^2 u(0,x)$ for any $a>0$.

From the equation, we have that $u_t(0,x) = F(D^2u(x,0))$. Therefore, using the homogeneity of $F$,
\[ u_t(0,ax) = F(D^2u(0,ax)) = F(a^{\alpha-2} D^2u(0,x)) = a^{\alpha-2} F(D^2u(0,x))  = a^{\alpha-2} u_t(0,x). \]
Thus, $u_t(0,x)$ is also homogenous of degree $\alpha-2 < 0$. The only way for such a function to be continuous at the origin is if it is identically zero.

From Theorem \ref{t:ks2}, $u_t$ is H\"older continuous. Then $u_t(0,x)$ must be zero for all $x \in B_1$.
\end{proof}

The proof of Proposition \ref{p:impossibility_C11} also proceeds along the same lines, but using a backwards uniqueness result for the equation satisfied for $u_t$ instead of its analyticity.

\begin{proof} [Proof of Proposition \ref{p:impossibility_C11}]
Like in the proof of Theorem \ref{t:impossibility}, we use Lemma \ref{l:utconstant} to conclude that $u_t(0,x)$ is constant for $x \in \R^d$. Withour loss of generality, we assert that $u_t(0,x)=0$ for all $x \in \R^d$.

Differentiating the equation with respect to $t$, we obtain the following equation for $u_t$,
\[ \partial_t (u_t) - a_{ij}(t,x) \partial_{ij} (u_t) = 0,\]
where $a_{ij}(t,x) = \partial F(D^2u) / \partial X_{ij}$.

From our assumptions, the coefficients $a_{ij}(t,x) = \partial F(D^2u) / \partial X_{ij}$ are Lipchitz and decay for large $|x|$ like in the conditions for the backward uniqueness result in \cite{WuZhang}. Therefore, $u_t \equiv 0$ everywhere, and we conclude in the proof of Theorem \ref{t:impossibility}.
\end{proof}

\bibliographystyle{plain}
\bibliography{sp}
\index{Bibliography@\emph{Bibliography}}%
\end{document}